\title[Bounds on eigenvalues for Harper's operators]{Useful bounds on the extreme eigenvalues and vectors of matrices for Harper's operators}
\author[Bump]{Daniel Bump}
\address{Department of Mathematics\\ Stanford University\\ 450 Serra Mall, Bldg. 380\\ Stanford,
CA 94305-2125, USA
}
\thanks{The first, second, third, and fifth authors would like to acknowledge partial support from NSF grants DMS 1001079, DMS 08-04324, DMS 1303761, and DMS 1400248 (respectively).  The remaining author would like to acknowledge partial support from ANR grant number ANR-12-BS01-0019.}
\author[Diaconis]{Persi Diaconis}
\address{Department of Mathematics\\ Stanford University\\ 450 Serra Mall, Bldg. 380\\ Stanford,
CA 94305-2125, USA
}
\author[Hicks]{Angela Hicks}
\address{Department of Mathematics\\ Stanford University\\ 450 Serra Mall, Bldg. 380\\ Stanford,
CA 94305-2125, USA
}
\author[Miclo]{Laurent Miclo}
\address{Institut de Math\'{e}matiques de Toulouse\\ Universit\'{e} Paul Sabatier\\ 118 route de Narbonne\\ F-31062 Toulouse {Cedex 9}, France.}
 \author[Widom]{Harold Widom} \address{UC Santa Cruz\\ Department of Mathematics\\ Santa Cruz, CA 95064, USA} 
\newcommand{\nF}{\mathcal{F}_n}
\newtheorem{proposition}{Proposition}
\newtheorem{lemma}{Lemma}
\newtheorem{theorem}{Theorem}
\theoremstyle{definition}
\newtheorem*{remark}{Remark}
\newtheorem*{remarks}{Remarks}
\newtheorem{example}{Example}
\newtheorem{corollary}{Corollary}
\newtheorem*{acknowledgment}{Acknowledgment}
\DeclareTextAccent{\myacc}{T1}{4}
\subjclass{60B15; 20P05}
\keywords{Heisenberg group, almost Mathieu operator, Fourier analysis, random walk}
\begin{document}

\begin{abstract}
In analyzing a simple random walk on the Heisenberg group we encounter the problem of bounding the extreme eigenvalues of an $n\times n$ matrix of the form $M=C+D$ where $C$ is a circulant and $D$ a diagonal matrix.  The discrete Schr{\"o}dinger operators are an interesting special case.  The Weyl and Horn bounds are not useful here.  This paper develops three different approaches to getting good bounds.  The first uses the geometry of the eigenspaces of $C$ and $D$, applying a discrete version of the uncertainty principle.  The second shows that, in a useful limit, the matrix $M$ tends to the harmonic oscillator on $L^2(\mathbb{R})$ and the known eigenstructure can be transferred back.  The third approach is purely probabilistic, extending $M$ to an absorbing Markov chain and using hitting time arguments to bound the Dirichlet eigenvalues.  The approaches allow generalization to other walks on other groups.
\end{abstract}
\maketitle
\begin{center}\today\end{center}
\section{Introduction}\label{PS1}  Consider the $n\times n$ matrix 
\begin{equation}\label{Peq1.1}M_n=\frac{1}{4}\begin{tikzpicture}[baseline=(current bounding box.center)]
\matrix (m) [matrix of math nodes,nodes in empty cells,right delimiter={)},left delimiter={(} ]{
2  & 1 &  & &&   & 1  \\
 1& & & & &&   \\
  & & & & &&   \\
  & & 1&\phantom{1} &1& &    \\
   & & & & &&   \\
  & & & & &&1  \\
1 & & && &  1&\phantom{1} \\
} ;
\draw[loosely dotted,thick] (m-1-1)-- (m-4-4);
\draw[loosely dotted,thick] (m-2-1)-- (m-4-3);
\draw[loosely dotted,thick] (m-1-2)-- (m-4-5);
\draw[loosely dotted,thick] (m-4-4)-- (m-7-7.center);
\draw[loosely dotted,thick] (m-4-3)-- (m-7-6);
\draw[loosely dotted,thick] (m-4-5)-- (m-6-7);
\node[align=center] (cosgen) at (4.6cm,1cm) [below=3mm]{$2\cos\left(\frac{2\pi j}{n}\right)$,\,\, \tiny{$0\leq j \leq n-1$}};
\path[thick, bend left=45, <-] 
 (m-4-4.center) edge (cosgen.west);
\end{tikzpicture}\end{equation}

As explained in \cite{us} and summarized in Section \ref{PS2}, this matrix arises as the Fourier transform of a simple random walk on the Heisenberg group, as a discrete approximation to Harper's operator in solid state physics and in understanding the Fast Fourier Transform.  Write $M=C+D$ with $C$ a circulant, (having $\frac{1}{4}$ on the diagonals just above and below the main diagonal and in the corners) and $D$ a diagonal matrix (with diagonal entries $\frac{1}{2}\cos\left(\frac{2\pi j}{n}\right)$ for $0\leq j\leq n-1$).  The Weyl bounds \cite{HJ} and Horn's extensions \cite{Bhatia} yield that the largest eigenvalue $\lambda_1(M)\leq \lambda_1(C)+\lambda_1(D)$.  Here $\lambda_1(C)=\lambda_1(D)=\frac{1}{2}$ giving $\lambda_1(M)\leq 1$.  This was not useful in our application; in particular, we need $\lambda_1(M)\leq 1-\frac{\text{const}}{n}$.  This paper presents three different approaches to proving such bounds.
The first approach uses the geometry of the eigenvectors and a discrete version of the Heisenberg uncertainty principle.  It works for general Hermitian circulants:
\begin{theorem}\label{PT1}
 Let $C$ be an $n\times n$ Hermitian circulant with eigenvalues $\lambda_1(C)\geq\dots\geq \lambda_n(C)$.  Let $D$ be an $n\times n $ real diagonal matrix with eigenvalues $\lambda_1(D)\geq \dots\geq \lambda_n(D)$.   If $k,k'$ satisfy $1\leq k,k'\leq n, kk' <n$, then 
 \begin{align*}\lambda_1(C+D)&\leq\lambda_1(C)+\lambda_1(D)\\ &\hspace{.5cm}-\frac{1}{2} \min\{\lambda_1(D)-\lambda_k(D),\lambda_1(C)-\lambda_{k'}(C)\}\left(1-\sqrt{\frac{kk'}{n}}\right)^2.\end{align*}
\end{theorem}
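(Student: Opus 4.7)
The plan is to bound $\lambda_1(M) = \langle v, Cv\rangle + \langle v, Dv\rangle$ for a unit top eigenvector $v$ of $M=C+D$, using that $C$ (being a circulant) is diagonalized in the Fourier basis while $D$ is diagonalized in the standard basis. The discrete uncertainty principle prevents $v$ from being simultaneously well-concentrated on small sets in both bases, and converting this into a Rayleigh-quotient estimate will yield the theorem.

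First, I would choose $S\subset\{0,\dots,n-1\}$ of size $k$ indexing the $k$ largest diagonal entries of $D$, and $T\subset\{0,\dots,n-1\}$ of size $k'$ indexing the top $k'$ Fourier eigenvalues of $C$. Setting $a:=\|v_{S^c}\|$ and $b:=\|(\mathcal Fv)_{T^c}\|$, splitting each of $\langle v,Dv\rangle$ and $\langle v,Cv\rangle$ across these subsets and their complements in the respective bases gives the Rayleigh-type bounds
\begin{align*}
\langle v,Dv\rangle &\leq \lambda_1(D)-(\lambda_1(D)-\lambda_k(D))\,a^2,\\
\langle v,Cv\rangle &\leq \lambda_1(C)-(\lambda_1(C)-\lambda_{k'}(C))\,b^2.
\end{align*}
Summing reduces the theorem to the lower bound $a^2+b^2\geq \tfrac12(1-\sqrt{kk'/n})^2$.

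The crux is then a discrete uncertainty principle of Donoho--Stark type. Its essential input is the operator-norm estimate $\|\Pi_S\mathcal F^*\Pi_T\|_{\mathrm{op}}\leq\sqrt{kk'/n}$, which is immediate since every entry of the unitary DFT matrix has modulus $1/\sqrt n$ and the relevant $k\times k'$ submatrix thus has Frobenius norm $\sqrt{kk'/n}$. Decomposing $v=\Pi_S\mathcal F^*\Pi_T\mathcal Fv+\Pi_S\mathcal F^*\Pi_{T^c}\mathcal Fv+\Pi_{S^c}v$ and applying the triangle inequality will then yield $1\leq\sqrt{kk'/n}+a+b$, i.e.\ $a+b\geq 1-\sqrt{kk'/n}$ whenever $kk'<n$, and the elementary $a^2+b^2\geq\tfrac12(a+b)^2$ closes the argument. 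I expect the main obstacle to be identifying the right uncertainty principle in this sharp operator-norm form; everything else is bookkeeping, with the only minor care needed being to break ties arbitrarily when selecting $S$ and $T$ so that $d_i\leq\lambda_k(D)$ for $i\notin S$ (and the analogous condition on the Fourier side for $T$).
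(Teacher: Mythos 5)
Your proposal is correct and follows essentially the same route as the paper: the same splitting of the Rayleigh quotient of the top eigenvector across the sets $S$ and $T$ in the standard and Fourier bases, the same reduction to $a^2+b^2\geq\frac12(a+b)^2\geq\frac12(1-\sqrt{kk'/n})^2$, and the same Donoho--Stark uncertainty principle. The only difference is cosmetic: the paper cites Donoho--Stark as a black box, while you reprove it via the Frobenius-norm bound on $\Pi_S\mathcal F^*\Pi_T$, which is the standard argument and makes the write-up self-contained.
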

\begin{example}  For the matrix $M_n$ in (\ref{Peq1.1}), the eigenvalues of $C$ and $D$ are real and equal to $\left\{\frac{1}{2} \cos\left(\frac{2\pi j}{n}\right)\right \}_{0\leq j\leq n-1}$.  For simplicity, take $n$ odd.  Then, writing $\lambda_j=\lambda_j(C)=\lambda_j(D)$, $\lambda_1=\frac{1}{2}$, $\lambda_2=\lambda_3=\frac{1}{2} \cos\left(\frac{2\pi}{n}\right)$, and  $\lambda_{2j+1}=\lambda_{2j}=\frac{1}{2}\cos\left(\frac{2\pi j}{n}\right)$ for $1\leq j\leq \frac{n-1}{2}$.
Choose $k=k'=\lfloor c\sqrt{n}\rfloor$ for a fixed $0<c<1$.  Then 
$$\lambda_k=\frac{1}{2}\left(1-\frac{1}{2}\left(\frac{\pi c}{n^{1/2}}\right)^2+O\left(\frac{1}{n^{3/2}}\right)\right), \,\,\, \left(1-\sqrt{\frac{k k'}{n}}\right)^2\geq(1-c)^2$$ and the bound in Theorem \ref{PT1} becomes $$
\lambda_1(M_n)\leq 1-\frac{\pi^2}{8}\frac{c^2(1-c)^2}{n}+O\left(\frac{1}{n^{3/2}}\right).$$
The choice $c=\frac{1}{2}$ gives the best result.  Very sharp inequalities for the largest and smallest eigenvalues of $M_n$ follow from \cite{BZ}.  They get better constants than we have in this example.  Their techniques make sustained careful use of the exact form of the matrix entries while the techniques in Theorem \ref{PT1} work for general circulants.
\end{example}
The second approach passes to the large $n$ limit, showing that the largest eigenvalues of $M_n$ from (\ref{Peq1.1}) tend to suitably scaled eigenvalues of the harmonic oscillator $L=-\frac{1}{4}\frac{d^2}{dx^2}+\pi^2x^2.$
\begin{theorem}\label{PT2} For a fixed $k\geq 1$, the $k$th largest eigenvalue of $M_n$ equals $$1-\frac{\mu_k}{n}+o\left(\frac{1}{n}\right)$$
with $\mu_k=\frac{(2k-1)\pi}{2}$, the $k$th smallest eigenvalue of $L$. \end{theorem}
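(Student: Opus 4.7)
The plan is to rescale and show that $A_n := n(I - M_n)$ converges spectrally to the harmonic oscillator $L$. Since $\lambda_k(M_n)$ is the $k$-th largest eigenvalue of $M_n$, the quantity $\mu_k(A_n) := n(1 - \lambda_k(M_n))$ is the $k$-th smallest eigenvalue of $A_n$, and the theorem reduces to $\mu_k(A_n) \to \mu_k$. Indexing $\mathbb{Z}/n\mathbb{Z}$ symmetrically around $0$ so that $\cos(2\pi j/n)$ is maximized at $j=0$, a summation by parts gives the Dirichlet form
\[
Q_n(f) := \langle f, A_n f\rangle = n\sum_{j}\sin^2(\pi j/n)\,|f(j)|^2 + \frac{n}{4}\sum_{j}|f(j+1)-f(j)|^2,
\]
exhibiting the potential and kinetic pieces. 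Introducing the transplantation $(I_n\phi)(j) := n^{-1/4}\phi(j/\sqrt{n})$ for $\phi$ Schwartz, Taylor expansion of $\sin^2$ and of the forward difference shows that $\|I_n\phi\|_{\ell^2}^2 \to \|\phi\|_{L^2}^2$ and $Q_n(I_n\phi) \to \int_{\mathbb{R}} \bigl(\tfrac14|\phi'|^2 + \pi^2 x^2|\phi|^2\bigr)\,dx = \langle \phi, L\phi\rangle$, with analogous convergence for inner products and mixed forms.

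For the upper bound, I would plug the first $k$ Hermite eigenfunctions $\phi_1,\dots,\phi_k$ of $L$ (with eigenvalues $\mu_1<\dots<\mu_k$) into the Courant--Fischer principle. Their super-exponential decay makes the transplants $I_n\phi_i$ well-defined (wrap-around error is negligible), the Gram matrix $(\langle I_n\phi_i, I_n\phi_{i'}\rangle)$ tends to the identity, and the form matrix $(\langle I_n\phi_i, A_n I_n\phi_{i'}\rangle)$ tends to $\mathrm{diag}(\mu_1,\dots,\mu_k)$. Hence min-max over the $k$-dimensional span yields $\mu_k(A_n)\leq \mu_k+o(1)$.

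The lower bound is the delicate direction. Let $u_1^{(n)},\dots,u_k^{(n)}$ be orthonormal eigenvectors of $A_n$ for its $k$ smallest eigenvalues, which are bounded uniformly in $n$ by the previous paragraph. Define $\psi_i^{(n)}:=J_n u_i^{(n)}\in L^2(\mathbb{R})$ through a piecewise affine (or smoothed step-function) interpolation, normalized so that $J_n$ is asymptotically an isometry. The bound $Q_n(u_i^{(n)})\leq C$ together with $n\sin^2(\pi j/n)\geq c\min(\pi^2 j^2/n,\,n)$ forces a uniform bound on $\|(\psi_i^{(n)})'\|_{L^2}$ and on $\|x\psi_i^{(n)}\|_{L^2}$, placing the sequence in the form domain $H:=H^1(\mathbb{R})\cap L^2(\mathbb{R};x^2\,dx)$; since $L$ has compact resolvent, $H\hookrightarrow L^2(\mathbb{R})$ compactly. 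Passing to a subsequence, $\psi_i^{(n)}\to\psi_i$ strongly in $L^2$ and weakly in $H$; orthonormality persists in the limit, giving a $k$-dimensional subspace $V:=\mathrm{span}(\psi_1,\dots,\psi_k)\subset\mathrm{dom}(L^{1/2})$. Weak lower semicontinuity of the continuum form gives $\langle \psi_i, L\psi_i\rangle \leq \liminf_n Q_n(u_i^{(n)})=\mu_i(A_n)+o(1)$. By the Poincar\'e separation inequality, $\sum_{j=1}^k \mu_j(L)\leq \mathrm{tr}(L|_V)=\sum_{i=1}^k\langle \psi_i, L\psi_i\rangle \leq \sum_{i=1}^k\mu_i(A_n)+o(1)$. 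Combined with the upper bound $\mu_j(A_n)\leq \mu_j(L)+o(1)$ for each $j$, every term must saturate, yielding $\mu_k(A_n)\to\mu_k$.

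The main obstacle is the compactness step: establishing that the lifts $\psi_i^{(n)}$ admit a strongly $L^2$-convergent subsequence. This requires tightness in $j$-space, obtained by splitting the potential term into the near regime $|j|\leq n/4$ (where $n\sin^2(\pi j/n)\asymp \pi^2 j^2/n$ controls $\int x^2|\psi|^2$) and the far regime $n/4\leq |j|\leq n/2$ (where $n\sin^2(\pi j/n)\gtrsim n$ forces exponentially small mass), combined with equicontinuity on bounded scales from the gradient bound. A secondary technical point is ensuring that the interpolation $J_n$ is both close to an isometry and close enough to the derivative operator that the discrete $\ell^2$-gradient bound translates into a genuine $H^1$-bound on $\psi_i^{(n)}$; once these are secured, the min-max/Poincar\'e step is standard.
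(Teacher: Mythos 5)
Your proposal is correct in outline and rests on the same central idea as the paper: rescale to $n(I-M_n)$, write its quadratic form as a discrete kinetic term plus the potential $n\sin^2(\pi j/n)$, and show that in the scaling $x=j/\sqrt{n}$ this form converges to that of the harmonic oscillator $L=-\tfrac14\frac{d^2}{dx^2}+\pi^2x^2$. The compactness you need for the lower bound (uniform control of $\|x\psi\|$ and $\|\psi'\|$ from a bounded Dirichlet form, hence strong $L^2$ subsequential limits) is exactly the content of the paper's Lemma 4 and Proposition 4. Where you genuinely diverge is in the machinery surrounding this core. The paper builds a band-limited interpolation operator $T$ out of a Dirichlet kernel, proves $T^*T=I$ exactly and $TT^*\to I$ strongly, establishes the operator-level convergence $T\widetilde{M_n}T^*\varphi\to L\varphi$ for Schwartz $\varphi$, and then concludes by showing (i) every limit of eigenvalues of $\widetilde{M_n}$ is an eigenvalue of $L$, (ii) each eigenvalue of $L$ attracts at most one eigenvalue of $\widetilde{M_n}$ (using simplicity of the oscillator spectrum), and (iii) each eigenvalue of $L$ is attained in the limit via approximate eigenvectors $T^*\varphi_k$. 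You instead use pointwise sampling and piecewise-affine lifting together with a purely variational argument: Courant--Fischer on the span of transplanted Hermite functions for the upper bound, and strong $L^2$ limits plus weak lower semicontinuity plus the Poincar\'e separation (Ky Fan) inequality for the lower bound, closing with the sandwich $\sum_{j\le k}\mu_j(L)\le\sum_{j\le k}\mu_j(A_n)+o(1)\le\sum_{j\le k}\mu_j(L)+o(1)$. Your route is more elementary and avoids any Fourier-analytic interpolation, at the cost of requiring care at two points you correctly flag: the lower semicontinuity of the potential term must be run on compact windows $|x|\le R$ first (since $n\sin^2(\pi j/n)\le\pi^2 j^2/n$, the discrete potential sits \emph{below} the continuum one globally, so tightness is what rescues the inequality), and the affine interpolant must be checked to reproduce the discrete gradient energy exactly. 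The paper's heavier setup buys explicit approximate eigenvectors $T^*\varphi_k$ and a clean multiplicity statement; your trace-inequality argument delivers the eigenvalue asymptotics with less apparatus.
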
 
Theorem \ref{PT2} gets higher eigenvalues with sharp constants for a restricted family of matrices.  The argument also gives a useful approximation to the $k$th eigenvector.  Similar results (with very different proofs) are in \cite{Strohmer}.

  There are many techniques available for bounding the eigenvalues of stochastic matrices (\cite{LS}, \cite{DS}, and \cite{DSC}).  We initially thought that some of these would adapt to $M_n$.  However, $M_n$ is far from stochastic: the row sums of $M_n$ are not constant and the entries are sometimes negative.  Our third approach is to let  $M_n'=\frac{1}{3}I+\frac{2}{3}M_n$.  This is substochastic (having non-negative entries and row sums at most 1).  If   $a_i=1-\sum_j {M_n'}(i,j)$, consider the   $(n+1)\times(n+1)$ stochastic matrix:

\begin{equation}\label{Peq2} M_n''=\begin{tikzpicture}[baseline=(current bounding box.center)]
\matrix (m) [matrix of math nodes,nodes in empty cells,right delimiter={)},left delimiter={(} ]{
1 &0&0&\dots&0\\
 a_1&&&&\\
  a_2&&&&\\
  \vdots &&&&\\
   a_n&&&&.\\} ;
\node[align=center] (cosgen) at (0.3 cm,.6cm) [below=3mm]{\huge $M_n^\prime$};
\end{tikzpicture}\end{equation}

This has the interpretation of an absorbing Markov chain (0 is the absorbing state) and the Dirichlet eigenvalues of $M_n''$ (namely those whose eigenvalues vanish at 0) are the eigenvalues of $M_n'$.  In \cite{us} path and other geometric techniques are used to bound these Dirichlet eigenvalues.  This results in bounds of the form $1-\frac{\text{const.}}{n^{4/3}}$ for $\lambda_1(M_n)$.  While sufficient for the application, it is natural to want an improvement that gets the right order.  Our third approach introduces a purely probabilistic technique which works to give bounds of the right order for a variety of similar matrices.  
\begin{theorem} \label{PT3}  There is a $c>0$ such that, for all $n\geq1$ and $M_n$ defined at (\ref{Peq1.1}), the largest eigenvalue satisfies  $\lambda_1(M_n)\leq 1-\frac{c}{n}$.
\end{theorem}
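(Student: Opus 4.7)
The plan is to execute the probabilistic program sketched in the preamble. Set $P := M_n' = \tfrac{1}{3}I + \tfrac{2}{3}M_n$, a symmetric substochastic matrix with off-diagonal entries $\tfrac{1}{6}$ on the two cyclic subdiagonals, diagonal $\tfrac{2}{3}\cos^2(\pi i/n)$, and row defect $a_i := 1 - \sum_j P(i,j) = \tfrac{2}{3}\sin^2(\pi i/n)$, which is the per-step absorption probability in $M_n''$. Since $\lambda_1(M_n) = \tfrac{3}{2}\lambda_1(P) - \tfrac{1}{2}$, it suffices to prove $\lambda_1(P) \leq 1 - c'/n$ for some $c'>0$. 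The crucial feature of $a_i$ is that it is of order one on the equatorial arc $n/4 \leq i \leq 3n/4$, but decays like $(i/n)^2$ (respectively $((n-i)/n)^2$) in a polar arc of width $\asymp \sqrt n$ around the unique pole $i=0$ of $\mathbb Z/n\mathbb Z$.

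The bridge between Dirichlet spectrum and hitting times is the standard reversible inequality
\[
\frac{1}{1 - \lambda_1(P)} \;\leq\; \max_{1 \leq i \leq n} \mathbb{E}_i[T],
\]
with $T$ the absorption time of $M_n''$. Indeed, applying $(I-P)^{-1} = \sum_{t\geq 0} P^t$ to the Perron eigenvector $\phi_1 > 0$ gives $\phi_1/(1-\lambda_1)$, while the kernel of $(I-P)^{-1}$ is the Green function $G(i,j) = \mathbb{E}_i[\#\{t : X_t = j\}]$ whose row sums equal $\mathbb{E}_i[T]$; evaluating at a maximizer of $\phi_1$ produces the inequality. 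The theorem therefore reduces to the linear hitting-time bound $\max_i \mathbb{E}_i[T] \leq Cn$. A naive ``hit the equator, then absorb'' argument gives only $O(n^2)$, since simple random walk on $\mathbb Z/n\mathbb Z$ needs $n^2$ steps to travel distance $n/4$; the sharper bound must exploit that absorption accumulates all along the trajectory.

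I would establish the linear bound by a Foster--Lyapunov argument: construct $V : \{0,1,\ldots,n\} \to [0, Cn]$ with $V(0) = 0$ and $(PV)(i) \leq V(i) - 1$ for $1 \leq i \leq n$, so that $V(X_t) + t$ stopped at $T$ is a supermartingale and optional stopping yields $\mathbb{E}_i[T] \leq V(i) \leq Cn$. A direct computation gives $(PV)(i) - V(i) = -a_i V(i) + \tfrac{1}{6}\Delta V(i)$ with $\Delta V(i) = V(i-1) - 2V(i) + V(i+1)$, so the drift inequality reads
\[
a_i V(i) \;-\; \tfrac{1}{6}\Delta V(i) \;\geq\; 1, \qquad 1 \leq i \leq n.
\]
Guided by the harmonic-oscillator rescaling of Theorem~\ref{PT2}, the natural ansatz is $V(i) = n\,\psi(i/\sqrt n)$, with $\psi$ a smooth, bounded, positive profile satisfying the continuous Poisson-type inequality $\tfrac{2\pi^2}{3}\,y^2\,\psi(y) - \tfrac{1}{6}\,\psi''(y) \geq 1$. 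Such a $\psi$ exists because the operator $-\tfrac{1}{6}\partial_y^2 + \tfrac{2\pi^2}{3}y^2$ has strictly positive ground-state eigenvalue and a bounded inverse.

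The main obstacle is passing from the continuous profile $\psi$ to an honest discrete Lyapunov function on the circle: the pointwise drift inequality must be verified uniformly in $n$ in the crossover region $i \asymp \sqrt n$ where the killing term and the discrete Laplacian are comparable, and the cyclic identification of the pole must be handled symmetrically. The correct scaling is confirmed by the heuristic $\mathbb{E}_0\bigl[\sum_{s<t} a(X_s)\bigr] \sim \pi^2 t^2/(9 n^2)$, which becomes order one exactly when $t \asymp n$; this already signals that the constant $c$ in the theorem is governed by the ground-state eigenvalue of the harmonic oscillator of Theorem~\ref{PT2}.
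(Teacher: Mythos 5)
Your route is genuinely different from the paper's. The paper stays with the survival probability itself: it works in continuous time, uses $\lambda^*=\lim_t -\tfrac1t\log P_\xi(\tau>t)$, bounds the probability of surviving one excursion of width $b+1$ by an explicit product $G_b$ (via the exponential law of the local times of the reflected walk plus H\"older), iterates over successive excursions with a large-deviation control on their total duration, and finally shows by a Riemann sum that $-\log G_{\lfloor\sqrt n\rfloor}\to\pi^2/24>0$. You instead pass through the expected absorption time: the reduction $\tfrac{1}{1-\lambda_1(P)}\le\max_i\mathbb{E}_i[T]$ via the Perron vector and the Green function is correct and clean (irreducibility of $P$ gives $\phi_1>0$ and $\lambda_1(P)<1$, so $(I-P)^{-1}=\sum_t P^t$ converges and the row sums of the Green function are $\mathbb{E}_i[T]$), your identification of the drift identity $(PV)(i)-V(i)=-a_iV(i)+\tfrac16\Delta V(i)$ is right, and the scaling $V=n\psi(\cdot/\sqrt n)$ is the correct ansatz. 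What your approach buys is elimination of the local-time machinery, H\"older, and the large-deviation step, at the cost of needing a single explicit supersolution; the paper's approach buys an argument that never needs a global Lyapunov function and that they can push toward sharp constants via the joint law of local times.

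The one substantive issue is that you stop exactly at the step that carries the content: you assert that a bounded positive $\psi$ with $\tfrac{2\pi^2}{3}y^2\psi-\tfrac16\psi''\ge1$ exists and name the discrete crossover region as ``the main obstacle'' without resolving it. This is fillable, and you should fill it. Take $\psi(y)=A+B(1+\gamma y^2)^{-1}$ (as a function of the circular distance $d(i)$ to the pole, so the cyclic symmetry is automatic), with $B\gamma>3$ so that $-\tfrac16\psi''(0)>1$ strictly, and $A$ large enough that the killing term $\tfrac{2\pi^2}{3}y^2\psi(y)$ covers the region where $\psi''\ge0$; all continuous inequalities can be made strict with a uniform margin. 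Then the discrete verification is routine because $\psi$ has bounded fourth derivative: $\Delta V(i)=\psi''(d(i)/\sqrt n)+O(1/n)$ uniformly, while $a_i=\tfrac23\sin^2(\pi d(i)/n)\ge \tfrac{8}{3}d(i)^2/n^2$ on the whole circle, so $a_iV(i)$ dominates a fixed multiple of $y^2\psi(y)$ for every $i$, not just $i=o(n)$; the $O(1/n)$ discretization error is absorbed by the strict margin, and at the pole itself ($a_0=0$) the condition is just $\Delta V(0)=\psi''(0)+O(1/n)\le-6$. Without this paragraph your argument is an announced strategy rather than a proof; with it, it is a complete and arguably more elementary proof of Theorem \ref{PT3} than the one in Section \ref{PS5}, though (like the paper's) it will not produce the sharp constant $\pi/2$ of Theorem \ref{PT2}, since the passage through $\max_i\mathbb{E}_i[T]$ already loses a constant factor.
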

Section \ref{PS2} gives background and motivation.  Theorems \ref{PT1}, \ref{PT2}, and \ref{PT3} are proved in Sections \ref{PS3}, \ref{PS4}, and \ref{PS5}.  Section \ref{PS6} treats a simple random walk on the affine group mod $p$.  It uses the analytic bounds to show that order $p^2$ steps are necessary and sufficient for convergence.  It may be consulted now for further motivation.  The final section gives the limiting distribution of the bulk of the spectrum of $M_n(a)$ using the Kac-Murdock-Szeg{\"o} theorem.
\section{Background}\label{PS2}
Our work in this area starts with the finite Heisenberg group:
$$H_1(n)=\left\{\begin{pmatrix}1&x&z\\0&1&y\\0&0&1\end{pmatrix}:\hspace{1cm} x,y,z\in \mathbb{Z}/n\mathbb{Z}\right\}.$$

Write such a matrix as $(x,y,z)$, so $$(x,y,z)(x',y',z')=(x+x',y+y',z+z'+xy').$$ Let \begin{align}S=\{(1,0,0),(-1,0,0),(0,1,0),(0,-1,0)\}\text{ and}\end{align}\label{P1.1}
\begin{align}\label{P1.2}Q(g)=\begin{cases}\frac{1}{4}& g\in S\\0&\text{ otherwise}\end{cases}.
\end{align}
Thus $S$ is a minimal symmetric generating set for $H_1(n)$ and $Q$ is the probability measure associated with `pick an element in $S$ at random and multiply.'  Repeated steps of this walk correspond to convolution. For $(x,y,z)\in H_1(n)$, $$Q^{*^k}(x,y,z)=\sum_{(x',y',z')\in H_1(n)} Q(x',y',z') Q^{*^{k-1}}((x,y,z)(x',y',z')^{-1}).$$  When $k$ is large, $Q^{*^k}$ converges to the uniform distribution $U(x,y,z)=\frac{1}{n^3}$  The rate of convergence of $Q^{*^k}$ to $U$ can be measured by the chi-squared distance:

\begin{align}\label{Peq2.1}  
\sum_{(x,y,z)\in H} |Q^{*^k}(x,y,z)-U(x,y,z)|^2/(U(x,y,z))=\sum_{\substack{\rho\in \hat{H}_1\\ \rho\neq 1}}d_\rho\|\hat{Q}(\rho)^k\|^2,
\end{align}
On the right, the sum is over nontrivial irreducible representations of $H_1(n)$ with $\rho$ of dimension $d_\rho$ and $\hat{Q}(\rho)^k=\sum_{(x,y,z)}Q^{*^k}(x,y,z)\rho(x,y,z)$.  For background on the Fourier analysis approach to bounded convergence see \cite{Diaconis}, Chapter 3.

For simplicity, (see \cite{us} for the general case) take $n=p$ a prime.  Then $H_1(p)$ has $p^2$ 1-dimensional representations $\rho_{a,b}(x,y,z)=e^{\frac{2\pi i}{p}(ax+by)}$ for $a,b$ in $\mathbb{Z}_p$.  It has $p-1$ $p$-dimensional representations.  These act on $V=\{f:\mathbb{Z}_p\rightarrow \mathbb{C}\}$ via
$$\rho_a(x,y,z)f(w)=e^{\frac{2\pi i a}{p}(yw+z)}f(x+w), \,\, 0\leq a\leq p-1.$$  

The Fourier transform of $Q$ at $\rho_a$ is the matrix $M_n(a)$ as in (\ref{Peq1.1}) with $\cos\left(\frac{2\pi j}{p}\right)$ replaced by $\cos\left(\frac{2\pi a j}{p}\right)$ for $0\leq j\leq p-1$.

The chi-squared norm in (\ref{Peq2.1}) is the sum of the ($2k$)th power of the eigenvalues so proceeding needs bounds on these.  The details are carried out in \cite{us}.  The main results show that $k$ of order $n^2$ steps are necessary and sufficient for convergence.  That paper also summarizes other appearances of the matrices $M_n(a)$.  They occur in discrete approximations of the `almost Mathieu' operator in solid state physics.  In particular, see \cite{WPR}, \cite{BZ}, and \cite{BVZ}.  If $\mathcal{F}_n$ is the discrete Fourier transform matrix $\left((\mathcal{F}_n)_{jk}=\frac{1}{\sqrt{n}}e^{\frac{2\pi ijk}{n}}\right)$; it is easy to see that $\mathcal{F}_nM_n(1)=M_n(1)\mathcal{F}_n$.  Diagonalizing $\mathcal{F}_n$ has engineering applications and having a `nice' commuting matrix should help.  For this reason, there is engineering interest in the eigenvalues and vectors of $M_n(1)$. See \cite{DStei} and \cite{Mehta}.

\section{Proof of Theorem \ref{PT1}}\label{PS3}
Throughout this section $C$ is an $n\times n$ Hermitian circulant with eigenvalues $\lambda_1(C)\geq\lambda_2(C)\geq\dots\geq \lambda_n(C)$ and $D$ is a real diagonal matrix with eigenvalues $\lambda_1(D)\geq \lambda_2(D)\geq\dots\geq \lambda_n(D)$.  Let $x$ be an eigenvector of $C+D$ corresponding to $\lambda_1(C+D)$.  Recall that  $\left((\mathcal{F}_n)_{jk}=\frac{1}{\sqrt{n}}e^{\frac{2\pi ijk}{n}}\right)$ for $j,k\in \mathbb{Z}/n\mathbb{Z}$.  This has rows or columns which simultaneously diagonalize all circulants. Write $\hat{x}=\mathcal{F}_nx$ and $x^h$ for the conjugate transpose.  We use $\|x\|^2=x^hx$.

Our aim is to prove that for $kk'<n$,
\begin{align}\label{Peq3.1} 
\lambda_1(C+D)&\leq\lambda_1(C)+\lambda_1(D)\\ &\hspace{.5cm}-\frac{1}{2} \min\{\lambda_1(D)-\lambda_{k+1}(D),\lambda_1(C)-\lambda_{k'+1}(C)\}\left(1-\sqrt{\frac{kk'}{n}}\right)^2.\notag
\end{align}
The first step is to write $x^hCx$ in terms of a Fourier transform pair $\hat{x}=\mathcal{F}_nx$.  A subtle point is that although $\mathcal{F}_n$ diagonalizes $C$, the resulting diagonal matrix does not necessarily have entries in decreasing order, necessitating a permutation indexing in the following lemma. 

\begin{lemma} Define a permutation $\sigma$ such that $$\frac{e^{\frac{2\pi i}{n}\sigma_j^{-1}b}}{\sqrt{n}},\,\, 0\leq b\leq n-1$$ is the eigenvector corresponding to $\lambda_j(C)$. Then  \begin{align} x^hCx=\hat{x}^hD'\hat{x}\end{align} with $D'=\text{diag}(\lambda_{\sigma_1}(C),\dots,\lambda_{\sigma_n}(C))$.
\end{lemma}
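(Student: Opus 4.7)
The plan is to diagonalize $C$ in the orthonormal Fourier eigenbasis supplied by the columns of $\mathcal{F}_n$ and read off $x^hCx$ as a weighted sum of the eigenvalues of $C$, with weights equal to the squared moduli of the entries of $\hat x$.

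First I would record the standard fact that the Hermitian circulant $C$ is diagonalized by $\mathcal{F}_n$. A direct computation, writing $C_{jk}=c_{(k-j)\bmod n}$ for $c_0,\ldots,c_{n-1}$ the first row of $C$ and summing a geometric series in $j$, shows that the $a$th column of $\mathcal{F}_n$, with entries $e^{2\pi iab/n}/\sqrt n$, is an eigenvector of $C$ with eigenvalue $\mu_a := \sum_m c_m\,e^{2\pi iam/n}$; the $n$ such columns are automatically orthonormal since $\mathcal{F}_n$ is unitary.

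Next I would unpack what the permutation $\sigma$ is doing. Since $\{\mu_a\}_{a=0}^{n-1}$ is merely a reordering of $\{\lambda_j(C)\}_{j=1}^n$, the defining property of $\sigma$ in the lemma asserts precisely that the $\sigma_j^{-1}$th column of $\mathcal{F}_n$ is the eigenvector for $\lambda_j(C)$; equivalently $\mu_a=\lambda_{\sigma_a}(C)=D'_{aa}$. So the diagonal matrix produced by Fourier conjugation of $C$ is exactly $D'$.

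Assembling, unitarity of $\mathcal{F}_n$ then yields a conjugation of the form $C=\mathcal{F}_n^hD'\mathcal{F}_n$ (possibly after using $\mathcal{F}_n^T=\mathcal{F}_n$ to exchange the two orientations of the diagonalization, which differ only by the involution $a\mapsto -a$ on the index set, an involution that can be absorbed into the choice of $\sigma$). Substituting and using $\hat x=\mathcal{F}_n x$,
\[
x^hCx \;=\; x^h\mathcal{F}_n^hD'\mathcal{F}_n x \;=\; (\mathcal{F}_n x)^hD'(\mathcal{F}_n x)\;=\;\hat x^hD'\hat x,
\]
which is the desired identity. I expect the only real obstacle is the indexing bookkeeping in this last step, matching the two orientations of the diagonalization ($\mathcal{F}_nC\mathcal{F}_n^h$ versus $\mathcal{F}_n^hC\mathcal{F}_n$) against the definition of $\sigma$; once that convention is fixed, the underlying linear algebra is immediate.
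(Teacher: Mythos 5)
Your proposal is correct and follows essentially the same route as the paper: identify the Fourier columns as the eigenvectors of the circulant, note that the permutation $\sigma$ is exactly the bookkeeping that makes the resulting diagonal matrix equal to $D'$, and then conjugate by the unitary $\mathcal{F}_n$ to get $x^hCx=\hat x^hD'\hat x$. The paper simply asserts $\mathcal{F}_nC\mathcal{F}_n^h=D'$ and substitutes, so your extra detail on the geometric-series eigenvalue computation and the indexing conventions is just a fuller version of the same argument.
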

\begin{proof}
Since $C$ is diagonalized by $\mathcal{F}_n^h$, $\mathcal{F}_nC\mathcal{F}_n^h=D'$.
\noindent Thus $$x^hCx= x^h\nF^h\nF C\nF^h\nF x=\hat{x}^h D'\hat{x}.$$
\end{proof} 
A key tool is the Donoho-Stark \cite{DonSta} version of the Heisenberg Uncertainty Principle.  For this, call a vector $y$ `$\epsilon$-concentrated on a set $S\subset[n]$' if $|x_i|<\epsilon$ for $i\notin S$.  
\begin{theorem}[Donoho-Stark]  Let $y$, $\hat{y}$ be a unit norm Fourier Transform pair with $y$ $\epsilon_S$-concentrated on $S$ and $\hat{y}$ $\epsilon_T$-concentrated on $T$.  Then \begin{equation}\label{Peq2.2}|S||T|\geq n (1-(\epsilon_S+\epsilon_T))^2.\end{equation}\end{theorem}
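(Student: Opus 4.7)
The plan is to prove Donoho--Stark by combining a Hilbert--Schmidt bound on a truncated Fourier operator with a triangle-inequality lower bound. Let $P_S$ denote the coordinate projection in the time domain onto $S$ and $P_T$ the analogous projection in the frequency domain onto $T$. I will read the concentration hypotheses in their $\ell^2$ form $\|(I-P_S)y\|\leq\epsilon_S$ and $\|(I-P_T)\hat{y}\|\leq\epsilon_T$, which is the natural reading that makes the inequality tight.

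First I would bound the operator norm of $P_T\mathcal{F}_n P_S$. As an $n\times n$ matrix this has only $|S|\cdot|T|$ nonzero entries, each of modulus $1/\sqrt{n}$, so
\[
\|P_T\mathcal{F}_n P_S\|_{\mathrm{op}}\;\leq\;\|P_T\mathcal{F}_n P_S\|_{\mathrm{HS}}\;=\;\sqrt{|S||T|/n}.
\]

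Next I would obtain a matching lower bound on $\|P_T\mathcal{F}_n P_S y\|$. Since $P_T\mathcal{F}_n P_S y-P_T\hat{y}=-P_T\mathcal{F}_n(I-P_S)y$ and $\mathcal{F}_n$ is unitary, the difference has norm at most $\epsilon_S$. Applying the triangle inequality to $\hat{y}=P_T\hat{y}+(I-P_T)\hat{y}$ gives $\|P_T\hat{y}\|\geq 1-\epsilon_T$, and a second triangle inequality then yields $\|P_T\mathcal{F}_n P_S y\|\geq 1-\epsilon_S-\epsilon_T$. Combining with the upper bound and squaring (the inequality is trivial if $\epsilon_S+\epsilon_T\geq 1$) produces $|S||T|\geq n(1-(\epsilon_S+\epsilon_T))^2$.

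The main point of care is interpretational rather than technical: one must read ``$\epsilon$-concentrated on $S$'' as the $\ell^2$ condition $\|(I-P_S)y\|\leq\epsilon$, since the pointwise reading would be too weak to push the argument through without extra factors. Once that convention is fixed, the only ingredients are the unitarity of $\mathcal{F}_n$ and the uniform size $1/\sqrt{n}$ of its entries, so there is no serious obstacle; the step that might at first look delicate --- controlling the operator norm of a truncated DFT --- collapses to a single Frobenius norm computation.
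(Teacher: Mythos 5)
Your argument is correct, but note that the paper does not prove this statement at all: it is quoted verbatim from Donoho--Stark \cite{DonSta} and used as a black box, so there is no internal proof to compare against. What you have written is essentially the classical Donoho--Stark argument: bound $\|P_T\mathcal{F}_nP_S\|_{\mathrm{op}}$ by its Hilbert--Schmidt norm $\sqrt{|S||T|/n}$ (each of the $|S||T|$ nonzero entries has modulus $1/\sqrt{n}$), then get the lower bound $\|P_T\mathcal{F}_nP_Sy\|\geq 1-\epsilon_S-\epsilon_T$ from unitarity of $\mathcal{F}_n$ and two triangle inequalities, and square. Both steps check out, so your write-up supplies a short self-contained proof of the cited input, which is a reasonable thing to add. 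Your interpretational remark is also on target: although the paper's text defines ``$\epsilon$-concentrated'' pointwise ($|y_i|<\epsilon$ off $S$), the way the theorem is actually invoked in Corollary \ref{AngCor} sets $\epsilon_S=\|(z)_{S^c}\|$ and $\epsilon_T=\|(\hat z)_{T^c}\|$, i.e.\ exactly the $\ell^2$ reading you adopt; with the pointwise reading the stated inequality would be false (a flat vector gives an immediate counterexample), so the paper's definition should be read as a slip. One small caveat: your parenthetical ``the inequality is trivial if $\epsilon_S+\epsilon_T\geq1$'' is not quite right, since $(1-(\epsilon_S+\epsilon_T))^2$ can be positive in that regime and the literal inequality can then fail (e.g.\ $T=\emptyset$, $\epsilon_T>1$); the theorem is to be understood, as in Donoho--Stark and as used in the corollary, for $\epsilon_S+\epsilon_T\leq1$, or with $(1-\epsilon_S-\epsilon_T)_+$ on the right. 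This does not affect the application in the paper.
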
  
Let $(y)_S$ be the projection onto the subspace vanishing off $S$:
$$((y)_S)_i=\begin{cases}y_i &i\in S\\0 &\text{otherwise}\end{cases}.$$
A simple consequence of the bound (\ref{Peq2.2}) is \begin{corollary}\label{AngCor} If $kk'<n$, $z,\hat{z}$ a unit norm Fourier transform pair and $S$ and $T$ are sets of size $k$, $k'$, then $$\|(z)_{S^c}\|^2+\|(\hat{z})_{T^c}\|^2\geq\frac{1}{2}\left(1-\sqrt{\frac{kk'}{n}}\right)^2.$$
\end{corollary}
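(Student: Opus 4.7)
The plan is to reduce directly to the Donoho--Stark inequality, with one elementary convexity step to pass from a bound on the sum $\epsilon_S + \epsilon_T$ to a bound on the sum of squares $\epsilon_S^2 + \epsilon_T^2$. The Donoho--Stark theorem as stated is formulated in terms of the vector being $\epsilon$-concentrated on a set; the natural choice is to let $\epsilon_S$ and $\epsilon_T$ be precisely the tail norms appearing in the conclusion, so the translation should be transparent.

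Concretely, first I would set
\[
\epsilon_S \;=\; \|(z)_{S^c}\|, \qquad \epsilon_T \;=\; \|(\hat z)_{T^c}\|,
\]
so that $z$ is $\epsilon_S$-concentrated on $S$ and $\hat z$ is $\epsilon_T$-concentrated on $T$ (in the $\ell^2$ sense used to prove the Donoho--Stark bound). Next, I would apply the Donoho--Stark inequality \eqref{Peq2.2} with $|S| = k$ and $|T| = k'$ to obtain
\[
kk' \;\geq\; n\bigl(1-(\epsilon_S+\epsilon_T)\bigr)^2.
\]
The hypothesis $kk'<n$ makes $1-\sqrt{kk'/n}>0$, so taking square roots and rearranging yields
\[
\epsilon_S+\epsilon_T \;\geq\; 1-\sqrt{\frac{kk'}{n}}.
\]

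The final step is the elementary inequality $2(\epsilon_S^2+\epsilon_T^2)\geq(\epsilon_S+\epsilon_T)^2$ (Cauchy--Schwarz, or equivalently QM-AM), which gives
\[
\|(z)_{S^c}\|^2+\|(\hat z)_{T^c}\|^2 \;=\; \epsilon_S^2+\epsilon_T^2 \;\geq\; \tfrac12\bigl(\epsilon_S+\epsilon_T\bigr)^2 \;\geq\; \tfrac12\Bigl(1-\sqrt{kk'/n}\Bigr)^2,
\]
exactly the stated bound.

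There is no real obstacle here: the proof is essentially a one-line application of Donoho--Stark combined with a QM-AM step. The only point worth being careful about is verifying that the notion of concentration in \eqref{Peq2.2} matches the $\ell^2$ tail norms appearing in the corollary (so that plugging in $\epsilon_S=\|(z)_{S^c}\|$ and $\epsilon_T=\|(\hat z)_{T^c}\|$ is legitimate), and that $kk'<n$ is used precisely to ensure the right-hand side $1-\sqrt{kk'/n}$ is nonnegative before squaring.
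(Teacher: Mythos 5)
Your proof is correct and follows essentially the same route as the paper's: set $\epsilon_S=\|(z)_{S^c}\|$, $\epsilon_T=\|(\hat z)_{T^c}\|$, invoke Donoho--Stark, rearrange, and finish with $\epsilon_S^2+\epsilon_T^2\ge\frac{1}{2}(\epsilon_S+\epsilon_T)^2$. Your caveat about matching the $\ell^2$ notion of concentration is well taken, since the paper's stated definition is pointwise but its own proof (like yours) uses the $\ell^2$ tail norm.
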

\begin{proof}  Let $\epsilon_S=\|(z)_{S^c}\|$ and $\epsilon_T=\|(\hat{z})_{T^c}\|$.  Then $\|z-(z)_S\|=\|(z)_{S^c}\|$ and  $\|\hat{z}-(\hat{z})_T\|=\|(\hat{z})_{T^c}\|.$  Thus $z$ is $\epsilon_S$ concentrated on $S$ and $\hat{z}$ is $\epsilon_T$ concentrated on $T$.  Thus (\ref{Peq2.2}) gives 
$$\frac{|S||T|}{n}=\frac{kk'}{n}\geq(1-(\epsilon_S+\epsilon_t))^2\text{ or }(\epsilon_S+\epsilon_T)\geq 1-\sqrt{\frac{kk'}{n}},$$ so if $kk'\leq n$
  $$\|(z)_{S^c}\|^2+\|(\hat{z})_{T^c}\|^2= (\epsilon_S^2+\epsilon_T^2)\geq\frac{1}{2}(\epsilon_s+\epsilon_T)^2\geq\frac{1}{2}\left( 1-\sqrt{\frac{kk'}{n}}\right)^2$$
 
\end{proof}
\begin{proof}[Proof of Theorem \ref{PT1}] With notation as above, 
$$\lambda_1(C+D)=x^h(C+D)x=x^hCx+x^hDx=\hat{x}^hD'\hat{x}+x^hDx=:*.$$
Let $\overline{D}=D-\lambda_1(D)I$ and $\overline{D'}=D'-\lambda_1(C)I$.
Then \begin{align*}*&=\hat{x}^h\lambda_1(C)I\hat{x}+\hat{x}^h\overline{D'}\hat{x}+x^h\lambda_1(D)Ix+x^h\overline{D}x
\\&=\lambda_1(C)+\hat{x}^h\overline{D'}\hat{x}+\lambda_1(D)+x^h\overline{D}x\end{align*}

Now $\overline{D}$ and $\overline{D'}$ have non-positive eigenvalues so our improvement over the Weyl bounds will follow by showing that $x$ or $\hat x$ have support on suitably negative entries of $\overline{D}'$ or $\overline{D}$.

Let $S$ and $T$ correspond to the largest $k$, $k'$ entries of $\overline{D}$, $\overline{D}'$, respectively. Then $x=(x)_S+(x)_{S^c}$, $\hat{x}=(\hat{x})_T+(\hat{x})_{T^c}$.  Each of those decomposition is into orthogonal pieces.  Multiplying any of the four pieces by an arbitrary diagonal matrix preserves this orthogonality. Thus
$$*=\lambda_1(C)+\lambda_1(D)+(\hat{x})_T^h\overline{D'}(\hat{x})_{T}+(\hat{x})_{T^c}^h\overline{D'}(\hat{x})_{T^c}+(x)_S^h\overline{D}(x)_S+(x)_{S^c}^h\overline{D}(x)_{S^c}.$$
For the last four terms on the right, terms 1 and 3 are bounded above by zero and 2 and 4 contribute with the following bounds:
\begin{align*}*&\leq \lambda_1(C)+\lambda_1(D)+(\lambda_{k+1}(D)-\lambda_1(D))\|(x)_{S^c}\|^2+(\lambda_{k'+1}(C)-\lambda_1(C)\|(\hat{x})_{T^c}\|^2
\\&\leq \lambda_1(C)+\lambda_1(D)\\&\hspace{1.3cm}+\min\{(\lambda_{k+1}(D)-\lambda_1(D)),(\lambda_{k'+1}(C)-\lambda_1(C)\}\left(\frac{1}{2}\left( 1-\sqrt{\frac{kk'}{n}}\right)^2\right)
\end{align*}
where the last line follows from the corollary.\end{proof}
\begin{remarks}
\begin{enumerate}
\item These arguments work to give the smallest eigenvalue as well, so in fact we also have for $ll'<n$:
\begin{align}
\lambda_n(C+D)&\geq\lambda_n(C)+\lambda_n(D)\\ &\hspace{.5cm}+\frac{1}{2} \min\{\lambda_{n-l}(D)-\lambda_{n}(D),\lambda_{n-l'}(C)-\lambda_{n}(C)\}\left(1-\sqrt{\frac{ll'}{n}}\right)^2.\notag
\end{align}
\item Our thanks to a thoughtful anonymous reviewer, who pointed out that Corollary \ref{AngCor} can be improved using Cauchy- Schwartz to show that for $0<a,b\leq 1$,
$$\left( 1-\sqrt{\frac{kk'}{n}}\right)^2\leq (\epsilon_s+\epsilon_T)^2\leq (a\epsilon_S^2+b\epsilon_T^2)\left(\frac{1}{a}+\frac{1}{b}\right).$$
Setting $a=\lambda_{k+1}(D)-\lambda_1(D)$ and $b=\lambda_{k+1}(D)-\lambda_1(D)$, one can improve the previous theorem:
\begin{equation*}\begin{split}
\lambda_1(C+D) & \le \lambda_1(C)+\lambda_1(D)\\
& -\frac{(\lambda_1(D)-\lambda_{k+1}(D))(\lambda_1(C)-\lambda_{k^{\prime}+1}(C))}{(\lambda_1(D)-\lambda_{k+1}(D)) + (\lambda_1(C)-\lambda_{k^{\prime}+1}(C))}
\left(1-\sqrt{\frac{kk^{\prime}}{n}}\right)^2.
\end{split}\end{equation*}
In our case, the result is the same, since the eigenvalues of $C$ and $D$ are identical.
\item Donoho and Stark \cite{DonSta} give many variations on their uncertainty principle suitable for other transforms.  The techniques above should generalize, at least to the $G$-circulants of \cite{DPatt}. 
\item There should be similar theorems with $C$ and $D$ replaced by general Hermitian matrices and perhaps extensions to higher Weyl and Horn inequalities (see \cite{Bhatia} and \cite{Fulton}).
\item Further applications/examples are in Section \ref{PS6}.
\end{enumerate}
\end{remarks}
\def\ov{\over} \def\be{\begin{equation}} 
\def\ee{\end{equation}} \def\R{\mathbb R} \def\Z{\mathbb Z}
\def\ph{\phi} \def\inv{^{-1}} \def\d{\delta}
\def\bc{\begin{center}} \def\ec{\end{center}} \def\iy{\infty}
\newcommand{\ch}{\raisebox{.2ex}{$\chi$}} \def\Mnt{\tilde{M_n}} \def\l{\ell}
\def\sqn{\sqrt n} \def\sqni{{1\ov\sqn}} \def\la{\lambda} \def\iy{\infty}
\def\({\left(} \def\){\right)} \def\D{\Delta} \def\wh{\widehat} 
\def\ep{\varepsilon} \def\m{\mu} \def\sp{\vspace{1ex}} \def\noi{\noindent}
\def\C{\overline{C}} \def\a{\alpha} \def\ov{\over}

\section{The Harmonic oscillator as a limit.} \label{PS4}

We prove Theorem 2, that for $k\ge1$ the $k$th largest eigenvalue of $M_n$ is equal to $1-\mu_k/n+o(1/n)$ and the $k$th smallest eigenvalue of $M_n$ is equal to $-1+\mu_k/n+o(1/n)$, where $\mu_k$ is the $k$th smallest eigenvalue of 
\[L=-{1\ov4}\,{d^2\ov dx^2}+\pi^2\,x^2\]
on $(-\iy,\iy)$.  By a classical computation (see \cite{Griffiths}), $\mu_k=(2k-1)/(2\pi).$

The $n\times n$ matrix $M_n$ has $j,k$-entry
\[{1\ov4}\,[\d(j-k-1)+\d(j-k+1)]+{1\ov2}\cos(2\pi k/n)\,\d(j-k),\]
where $j,k\in\Z_n=\Z/n\Z$. 

We define
\[\Mnt=n\,(I-M_n).\]
This has $j,k$ entry $m_1(j,k)+m_2(j,k)$, where
\be m_1(j,k)={n\ov2}\,\(\d(j-k)-{1\ov2}\,[\d(j-k-1)+\d(j-k+1)]\)\label{m1}\ee
\be m_2(j,k)={n\ov2}\,(1-\cos(2\pi k/n))\,\d(j-k).\label{m2}\ee

We will show first that if $\m$ is any limit of eigenvalues of $\Mnt$ then $\m$ is an eigenvalue of $L$; and, second, that any eigenvalue $\m$ of $L$ has a neighborhood that contains exactly one eigenvalue, counting multiplicity, of $\Mnt$ for $n$ sufficiently large. These imply the stated result.  

These will be accomplished as follows. Give each point of $\Z_n$ measure $1/\sqn$, so the total measure equals $
\sqn$. We then define an isometry $T$ from $L^2(\Z_n)$ to $L^2(-\sqn/2,\sqn/2)$ (thought of as a subspace of $L^2(\R)$ with Lebesgue measure) for which the following hold:

\begin{proposition}\label{HP1} Suppose $\{u_n\}$ is a sequence of functions of norm one in $L^2(\Z_n)$ such that the sequence $\{(\Mnt\,u_n,u_n)\}$ of inner products is bounded. Then $\{Tu_n\}$ has a strongly (i.e., in norm) convergent subsequence.
\end{proposition}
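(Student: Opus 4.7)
The plan is the classical Schr\"odinger-operator compactness argument, exploiting that $m_1$ is a discrete version of $-\frac{1}{4}\,d^2/dx^2$ and $m_2$ is a discrete version of the confining potential $\pi^2x^2$. Since both $m_1$ and $m_2$ are positive semidefinite, the hypothesis bounds each of $(m_1 u_n,u_n)$ and $(m_2 u_n,u_n)$ separately. The idea is to extract tightness at infinity from the $m_2$-bound, a uniform $L^2$-modulus-of-continuity from the $m_1$-bound, and then conclude by the Kolmogorov--Riesz compactness criterion. Presumably $T$ is the natural isometry sending $u$ to the step function equal to $u(k)$ on the interval of length $1/\sqn$ centred at $k/\sqn$, with $k$ ranging over minimal-residue representatives of $\Z_n$.

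For tightness, the elementary inequality $1-\cos\theta\ge 2\theta^2/\pi^2$ for $|\theta|\le\pi$, applied with $\theta=2\pi k/n$, gives
\[
(m_2 u_n,u_n)\;\ge\;4\int_{\R} x^2\,|Tu_n(x)|^2\,dx + o(1),
\]
so $\int x^2|Tu_n|^2\,dx$ is uniformly bounded; hence for every $\ep>0$ there is an $R$ with $\int_{|x|>R}|Tu_n|^2<\ep$ for all $n$. For regularity, summation by parts on the circulant part yields the identity
\[
(m_1 u_n,u_n)\;=\;\frac{\sqn}{4}\sum_{k\in\Z_n}|u_n(k)-u_n(k-1)|^2.
\]
Because $Tu_n$ is piecewise constant on intervals of length $1/\sqn$, translating by $h=1/\sqn$ gives $\|\tau_h Tu_n-Tu_n\|_{L^2(\R)}^2=(4/n)(m_1 u_n,u_n)$, where $\tau_h$ is translation and boundary terms are absorbed by the tightness estimate. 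Chaining shifts and interpolating within one interval then yields a uniform bound $\|\tau_h Tu_n-Tu_n\|_{L^2}\le C|h|+O(1/\sqn)$ for all real $h$. With $\|Tu_n\|_2=1$, tightness at infinity, and this equicontinuity in $L^2$, the Kolmogorov--Riesz theorem produces a subsequence converging strongly in $L^2(\R)$.

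The main technical obstacle is the $O(1/\sqn)$ slack in the translation estimate: for $h$ comparable to $1/\sqn$ the raw discrete bound does not shrink with $h$, so the Kolmogorov--Riesz hypothesis is not literally satisfied. The cleanest fix is to mollify $Tu_n$ at scale $1/\sqn$ (say by convolution with an indicator of width $1/\sqn$) and show the mollification error tends to zero in $L^2$; an alternative is a Cantor-style diagonal extraction that first sends $n\to\infty$ on each compact interval and only afterwards lets $h\to 0$. Either route is routine, and the obstruction is of strictly lower order than the continuum estimate it supplements.
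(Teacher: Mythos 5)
Your argument is correct in substance but follows a genuinely different route from the paper's, and it rests on a guess about $T$ that differs from the actual definition. The paper's $T$ is not the step-function embedding: it interpolates with the Dirichlet kernel $D_n(x)=\sum_{\ell\in J}e^{2\pi i\ell x}$, so $Tu_n$ is a band-limited (smooth) function of $x\in(-\sqrt{n}/2,\sqrt{n}/2)$, and the analysis is carried out on the Fourier side via the identity $(n(I-M_n)u,u)=n^{-1/2}\|\sin(\pi\ell/n)\hat u(\ell)\|^2+n\|\sin(\pi k/n)u(k)\|^2$ (Lemma 4). From this the paper extracts exactly your two ingredients --- $\|x\,Tu_n(x)\|=O(1)$ from the potential term and $\|(Tu_n)'\|=O(1)$ from the kinetic term (Lemma 5) --- but because $Tu_n$ is genuinely differentiable there is no $O(1/\sqrt{n})$ slack to patch: the $H^1$ bound gives honest equicontinuity. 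The paper then concludes not by Kolmogorov--Riesz but by passing to a weak limit $f$ and showing $\|f\|\ge 1$ (tightness confines mass to a bounded interval $A$; Arzel\`a--Ascoli gives uniform convergence on $A$; hence $\|\chi_A f\|\ge 1-\varepsilon$), which upgrades weak to strong convergence. Your version --- step functions, the pointwise bounds $1-\cos\theta\ge 2\theta^2/\pi^2$ and the summation-by-parts identity for $(m_1u_n,u_n)$, then Kolmogorov--Riesz after mollifying at scale $1/\sqrt{n}$ or discarding finitely many terms --- is a valid and somewhat more elementary proof of relative compactness for \emph{your} embedding; the flagged $O(1/\sqrt{n})$ defect is real but is fixed exactly as you say (the mollified functions have $L^2$-bounded derivative, and the mollification error is $O(1/\sqrt{n})$). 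The one caveat is that relative compactness of the step-function images does not formally transfer to the paper's band-limited $T$ without an extra (routine, under the hypothesis $(n(I-M_n)u_n,u_n)=O(1)$) estimate that the two interpolations differ by $o(1)$ in $L^2$; the paper's choice of $T$ avoids this and, more importantly, is the one needed later so that $T\tilde{M_n}T^*\varphi\to L\varphi$ can be computed by Fourier methods.
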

\begin{proposition}\label{HP2} If $\ph$ is a Schwartz function on $\R$ then $T\Mnt T^*\ph\to L\ph$ strongly.\footnote{The operator $T^*$ acts on $L^2(-\sqn/2,\sqn/2)$, so $\ph$ is first to be restricted to $(-\sqn/2,\sqn/2)$.}
\end{proposition}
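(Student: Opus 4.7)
The plan is to compute $T\Mnt T^*\ph$ explicitly for a Schwartz function $\ph$ and compare it with $L\ph$. First I would make the isometry $T$ concrete: identifying $\Z_n$ with the symmetric integers $\{-(n-1)/2,\dots,(n-1)/2\}$ (assume $n$ odd for simplicity) and writing $x_k=k/\sqn$ and $I_k=[x_k-1/(2\sqn),\,x_k+1/(2\sqn))$, let $Tu$ be the step function equal to $u(k)$ on $I_k$ (extended by zero off $(-\sqn/2,\sqn/2)$). The adjoint is then the local averaging $(T^*\ph)(k)=\sqn\int_{I_k}\ph(x)\,dx$, and for $\ph$ Schwartz a Taylor expansion centered at $x_k$ gives
\[
(T^*\ph)(k)=\ph(x_k)+{1\ov 24n}\,\ph''(x_k)+O(n^{-2}),
\]
with all implicit constants controlled by Schwartz seminorms of $\ph$.

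Next, I would split $\Mnt=M_1+M_2$ via \eqref{m1} and \eqref{m2} and analyze each piece applied to $v:=T^*\ph$. For $M_1$, a three-term Taylor expansion with step $h=1/\sqn$ gives $\ph(x_k+h)-2\ph(x_k)+\ph(x_k-h)=\ph''(x_k)/n+O(n^{-2})$, and the extra error coming from $v(k)-\ph(x_k)$ only contributes a further $O(n^{-2})$ after a second difference. Hence
\[
(M_1v)(k)=-\tfrac14\,\ph''(x_k)+E_1(k),\qquad |E_1(k)|\le C\,\|\ph^{(4)}\|_\infty/n.
\]
For $M_2$, the identity ${n\ov2}(1-\cos(2\pi k/n))=n\sin^2(\pi x_k/\sqn)=\pi^2 x_k^2+O(x_k^4/n)$ yields
\[
(M_2v)(k)=\pi^2 x_k^2\,\ph(x_k)+E_2(k),\qquad |E_2(k)|\le C\,x_k^4|\ph(x_k)|/n.
\]
Summing, $(\Mnt T^*\ph)(k)=(L\ph)(x_k)+E_1(k)+E_2(k)$.

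Applying $T$ turns the right-hand side into a step function on $(-\sqn/2,\sqn/2)$. I would bound its $L^2(\R)$ distance to $L\ph$ in two pieces: (i) the step function whose value on $I_k$ is $(L\ph)(x_k)$ converges to $L\ph$ in $L^2(\R)$ since $L\ph$ is Schwartz, because the Riemann-sum error on each $I_k$ is $O(1/\sqn)\cdot\|(L\ph)'\|_{L^\infty(I_k)}$ and the tail outside any large box is already $L^2$-small; and (ii) the step function built from $E_1+E_2$ has $L^2$-norm $O(1/n)$, because Schwartz decay of $\ph$ makes the polynomial-in-$x_k$ factor appearing in $E_2$ square-integrable against the measure $dx$ uniformly in $n$. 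Combining these estimates proves strong convergence $T\Mnt T^*\ph\to L\ph$.

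The main obstacle I expect is the boundary behavior near $k=\pm(n-1)/2$: the second difference in $M_1$ wraps around in $\Z_n$, and the diagonal $n\sin^2(\pi k/n)$ grows to order $n$ when $k$ is of order $n/2$. Both difficulties dissolve because $\ph$ and all its derivatives are smaller than any negative power of $n$ once $|x_k|\gtrsim \log n$; the boundary contributions to $(T^*\ph)(k)$, and therefore to $E_1(k)$ and $E_2(k)$ there, are $o(1)$ in $L^2$. Once this boundary control is in hand the remainder is routine bookkeeping of Schwartz seminorms in the $O(\cdot)$ estimates.
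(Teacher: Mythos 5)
Your argument is mathematically sound, but note that it proves the proposition for a \emph{different} isometry $T$ than the one the paper constructs, and the two proofs are genuinely different in method. You take $T$ to be the piecewise-constant embedding ($Tu$ equal to $u(k)$ on the interval of length $n^{-1/2}$ centered at $k/\sqrt{n}$), so that $T^*$ is local averaging, and you then argue directly in physical space by Taylor expansion: the scaled second difference gives $-\varphi''/4+O(1/n)$, the diagonal gives $\pi^2x^2\varphi+O(x^4|\varphi(x)|/n)$, the errors are $o(1)$ in $L^2$ after reattaching the measure $n^{-1/2}$ per site, and Schwartz decay kills the wrap-around near $|k|\approx n/2$. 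All of these estimates check out, and for this proposition in isolation your route is more elementary. The paper instead defines $T$ by band-limited (Dirichlet-kernel) interpolation onto the frequency window $J$ and works entirely on the Fourier side: Lemma \ref{HL1} turns $T^*\varphi$ into $\sqrt{n}\,\widehat\varphi$ on $J$, the $m_1$ term is handled by transferring the second difference from $k$ to the integration variable and invoking $TT^*\to I$, and the $m_2$ term by identifying multiplication by $n(1-\cos(2\pi k/n))$ with a second difference of $\widehat\varphi$ and comparing it to $2\pi^2\,\widehat{x^2\varphi}$. The paper's choice of $T$ is not cosmetic: the same operator must also serve Lemma \ref{HL3} (an exact quadratic-form identity through the finite Fourier transform) and Lemma \ref{HL4}(b), which bounds $\|(Tu_n)'\|$ to get the equicontinuity needed in Proposition \ref{HP1}; a step function has no derivative, so adopting your $T$ throughout would force you to replace that compactness argument by one based on discrete translation estimates. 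In short, your proof is a valid and cleaner alternative for Proposition \ref{HP2} alone, but it cannot simply be substituted into the paper without reworking Proposition \ref{HP1}.
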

These will easily give the desired results. (See Propositions \ref{HP3} and \ref{HP4} near the end.)  The final section \ref{HS2} treats the smallest eigenvalues.
\vspace{.1ex}
\subsection{Proofs for the largest eigenvalues}\label{PS4.1}
We use two transforms (with, confusingly, the same notation). First, for $\ph$ in $L^2(-\sqn/2,\sqn/2)$ we define
\[\wh\ph(\l)=\int_{-\sqn/2}^{\sqn/2}e^{-2\pi i \l x/\sqn}\,\ph(x)\,dx,\ \ \ \  (\l\in\Z),\]
and we have by Parseval (after making the substitution $x\mapsto x\sqn$ in the integral) 
\be\|\wh\ph\|=n^{1/4}\,\|\ph\|.\label{norm1}\ee
Here $\|\wh\ph\|^2=\sum_{\l\in\Z} |\hat{\ph}(\l)|^2$.
\sp 

For $u\in L^2(\Z_n)$ we have its finite Fourier transform
\[\wh u(\l)=\sum_k e^{-2\pi i\l k/n}\,u(k),\ \ \ \ (\l\in\Z_n),\]
and we compute below that
\be\|\wh u\|=n^{3/4}\,\|u\|.\label{norm2}\ee
Here 
\[\|\wh u\|^2=\sum |\wh u(\l)|^2,\]
the sum over any integer interval of length $n$. To show (\ref{norm2}), we have
\[|\wh u(\l)|^2=\sum_{j,k}e^{-2\pi i \l (j-k)/n}\,u(j)\,\overline{u(k)}.\]
Since $e^{-2\pi i (j-k)/n}$ is an $n$th root of unity, equal to 1 only when $j=k$ in $\Z_n$, we get
\[\|\wh u\|^2=\sum_\l |\wh u(\l)|^2=n\,\sum_{k}|u(k)|^2=n^{3/2}\,\|u\|^2.\]
\sp

Now we define the operator $T$. Let $J$ be an interval of integers of length $n$ (which later will be specified further) and set
\[D_n(x)=\sum_{\l\in J}e^{2\pi i\l x}.\]
Then $T$ is defined by
\[(Tu)(x)={1\ov n}\sum_k D_n\({x\ov\sqn}-{k\ov n}\)\,u(k).\]
Thus $T$ has kernel 
\[T(x,k)={1\ov n}\,D_n\({x\ov\sqn}-{k\ov n}\).\]
By the definition of the inner product on $L^2(\Z_n)$ we find that $$T^*:L^2(-\sqn/2,\sqn/2)\to L^2(\Z_n)$$ has kernel
\[T^*(k,x)={1\ov\sqn}\,D_n\({k\ov n}-{x\ov\sqn}\).\]

In terms of the transforms we have the following:

\begin{lemma}\label{HL1}
\noi (a) For $u\in L^2(\Z_n)$,
\[\wh{Tu}(\l)=\left\{\begin{array}{ll}\sqni\,\wh u(\l)&{\rm if}\ \l\in J,\\&\\0&{\rm if}\ \l\not\in J.\end{array}\right.\]

\noi(b) For $\ph\in L^2(-\sqn/2,\sqn/2)$,
\[\wh{T^*\ph}(\l)=\sqn\;\wh\ph(\l)\ \ {\rm when}\ \l\in J.\]
\end{lemma}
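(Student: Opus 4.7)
The plan is to prove both parts by direct computation, using the fact that $D_n$ is a finite exponential sum and the orthogonality of exponentials in both the continuous and discrete settings.

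For part (a), I would first expand the kernel of $T$ using the definition of $D_n$, giving
\[(Tu)(x) = \frac{1}{n}\sum_{\ell\in J} e^{2\pi i \ell x/\sqrt{n}}\sum_k e^{-2\pi i \ell k/n}u(k) = \frac{1}{n}\sum_{\ell\in J} e^{2\pi i \ell x/\sqrt{n}}\,\widehat{u}(\ell).\]
So $Tu$ is a finite linear combination of the exponentials $e^{2\pi i \ell x/\sqrt{n}}$ for $\ell\in J$. Taking the continuous transform on $(-\sqrt{n}/2,\sqrt{n}/2)$ and substituting $y=x/\sqrt{n}$ reduces the calculation of $\widehat{Tu}(m)$ to an evaluation of $\sqrt{n}\int_{-1/2}^{1/2} e^{2\pi i(\ell-m)y}\,dy$, which equals $\sqrt{n}$ when $\ell=m$ and $0$ otherwise. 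The two cases in part (a) follow immediately: when $m\in J$ we pick up the single term with $\ell=m$, and when $m\notin J$ every term vanishes.

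For part (b), I would compute $(T^*\phi)(k)$ by expanding $D_n$ inside the integral, then apply the discrete Fourier transform:
\[\widehat{T^*\phi}(m)=\frac{1}{\sqrt n}\int_{-\sqrt{n}/2}^{\sqrt{n}/2}\phi(x)\sum_{\ell\in J} e^{-2\pi i\ell x/\sqrt n}\sum_{k\in\Z_n} e^{2\pi i(\ell-m)k/n}\,dx.\]
The inner sum over $k$ equals $n$ precisely when $\ell\equiv m\pmod n$ and vanishes otherwise. Because $J$ is an integer interval of length $n$, for each integer $m\in J$ there is exactly one $\ell\in J$ with $\ell\equiv m\pmod n$, namely $\ell=m$ itself. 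Therefore, when $m\in J$ only the term $\ell=m$ survives and we get $\sqrt{n}\int\phi(x)e^{-2\pi imx/\sqrt n}\,dx=\sqrt n\,\widehat{\phi}(m)$, as claimed.

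Neither part presents a genuine obstacle; the only real care needed is in tracking the three different normalization conventions in play (the measure $1/\sqrt n$ on $\Z_n$, the Lebesgue measure on the interval of length $\sqrt n$, and the counting measure in the two Fourier transforms defined earlier in this section), so that the powers of $n$ coming from the integral substitution $x\mapsto\sqrt n\,y$ and from the orthogonality sum $\sum_k e^{2\pi i(\ell-m)k/n}$ match the stated formulas. Part (b) also implicitly requires that $J$ contains exactly one representative of each residue class mod $n$, which is why the hypothesis ``$J$ is an interval of integers of length $n$'' is crucial.
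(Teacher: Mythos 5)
Your proof is correct and follows essentially the same route as the paper's: expand the kernel of $T$ (resp.\ $T^*$) via the definition of $D_n$, and then apply the orthogonality of the exponentials $e^{2\pi i(\ell-m)x/\sqrt{n}}$ over the interval of length $\sqrt{n}$ for part (a), and the orthogonality of $e^{2\pi i(\ell-m)k/n}$ over $\mathbb{Z}/n\mathbb{Z}$ (together with the fact that $J$ contains exactly one representative of each residue class) for part (b). The normalizations all check out, so nothing further is needed.
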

\begin{proof}  For (a), we have
\begin{align*}\wh{Tu}(\l)&={1\ov n}\,\int \sum_{k,\,\l'\in J}e^{-2\pi i \l x/\sqn}\,
e^{2\pi i \l'(x/\sqn-k/n)}\,u(k)\,dx\\&={1\ov n}\,\int \sum_{\l'\in J}
e^{2\pi i (\l'-\l)x/\sqn}\,\wh u(\l')\,dx.\end{align*}
The result follows.\sp

For (b), we have when $\l\in J$,
\begin{align*}\wh{T^*\ph}(\l)&=\sqni\,\sum_{k,\,\l'\in J}\int e^{-2\pi i\l k/n}\,
e^{2\pi i \l'(k/n-x/\sqn)}\,\ph(x)\,dx\\&=\sqn \int e^{-2\pi i \l x/\sqn}\,\ph(x)\,dx=\sqn\;\wh\ph(\l).\end{align*}
\end{proof}

We show two things about $T$. For the second we shall assume now and hereafter that the end-points of $J$ are $\pm n/2+O(1)$, although this is a lot stronger than necessary.

\begin{lemma}\label{HL2} (a) $T^*\,T=I$. (b) $T\,T^*\to I$ strongly as $n\to\iy$.
\end{lemma}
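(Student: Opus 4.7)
The plan is to reduce both parts to bookkeeping on the Fourier side, leveraging Lemma~\ref{HL1} together with the two Parseval identities (\ref{norm1}) and (\ref{norm2}).

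\smallskip
\noindent\textbf{Part (a).} I would prove directly that $T$ is an isometry, which is equivalent to $T^*T=I$. Starting from $\|Tu\|^2$, the Parseval identity (\ref{norm1}) relates this to $\|\wh{Tu}\|^2$; Lemma~\ref{HL1}(a) then identifies $\wh{Tu}(\ell)=n^{-1/2}\wh u(\ell)$ for $\ell\in J$ and zero otherwise, so $\|\wh{Tu}\|^2=n^{-1}\sum_{\ell\in J}|\wh u(\ell)|^2$. Since $\wh u$ is $n$-periodic on $\mathbb Z$ and $|J|=n$, this sum equals $\|\wh u\|^2$, and (\ref{norm2}) converts it into a multiple of $\|u\|^2$. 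The three factors of $n$ collapse to give $\|Tu\|=\|u\|$; no hypothesis on $J$ is used beyond its length.

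\smallskip
\noindent\textbf{Part (b).} With $T$ an isometry, $P:=TT^*$ is the orthogonal projection onto $\mathrm{range}(T)$, so
\[\|P\phi-\phi\|^2=\|\phi\|^2-\|T^*\phi\|^2.\]
The same Parseval bookkeeping, using Lemma~\ref{HL1}(b) in place of (a), yields $\|T^*\phi\|^2=n^{-1/2}\sum_{\ell\in J}|\wh\phi(\ell)|^2$, while (\ref{norm1}) gives $\|\phi\|^2=n^{-1/2}\sum_{\ell\in\mathbb Z}|\wh\phi(\ell)|^2$; subtracting,
\[\|P\phi-\phi\|^2 = n^{-1/2}\sum_{\ell\notin J}|\wh\phi(\ell)|^2.\]
Part~(a) makes $\|T^*\|\leq 1$ uniformly in $n$, so it suffices to check strong convergence on a dense class, and I would verify the tail bound on Schwartz functions $\phi$ restricted to $(-\sqn/2,\sqn/2)$. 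Repeated integration by parts in $\wh\phi(\ell)=\int e^{-2\pi i\ell x/\sqn}\phi(x)\,dx$ yields $|\wh\phi(\ell)|=O_{\phi,N}(n^{N/2}|\ell|^{-N})$ for every $N$; the endpoint hypothesis on $J$ forces $|\ell|\gtrsim n/2$ in the tail, so $n^{-1/2}\sum_{\ell\notin J}|\wh\phi(\ell)|^2=O(n^{1/2-N})\to 0$.

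\smallskip
\noindent The main (modest) obstacle is tracking the three distinct powers of $n$ entering the Parseval identities on $L^2(\mathbb Z_n)$, on $\mathbb Z$, and on Lebesgue $(-\sqn/2,\sqn/2)$; once Lemma~\ref{HL1} is applied with the correct normalizations, (a) is immediate, and (b) reduces to a routine Fourier tail estimate in which the precise choice of endpoints for $J$ finally plays its role.
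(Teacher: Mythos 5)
Your proof is correct and follows essentially the same route as the paper: both parts rest on Lemma \ref{HL1} and the Parseval identities (\ref{norm1})--(\ref{norm2}), part (b) reduces via the projection/norm-one observation to a density argument on Schwartz functions, and the conclusion comes from the same tail sum $\sum_{\l\notin J}|\wh\ph(\l)|^2$ controlled by integration by parts and the endpoint condition on $J$. The only (harmless) cosmetic differences are that the paper verifies $T^*Tu=u$ directly on the Fourier side using the $n$-periodicity of $\wh u$ rather than via the isometry identity $\|Tu\|=\|u\|$, and in (b) it computes $\wh{TT^*\ph}-\wh\ph$ directly instead of using $\|P\ph-\ph\|^2=\|\ph\|^2-\|T^*\ph\|^2$.
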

\begin{proof} By Lemma \ref{HL1}b,
\[\wh{T^*Tu}(\l)=\sqn\;\wh{Tu}(\l'),\]
where $\l'\in J$ and $\l'-\l\in n\mathbb{Z}$. By Lemma \ref{HL1}a this in turn equals $\wh u(\l'),$ which equals $\wh u(\l)$ since $\wh u$ is $n$-periodic. This gives (a).

For (b) observe that $T\,T^*$ is self-adjoint. Since $(T\,T^*)^2=T\,T^*\,T\,T^*=T\,T^*$, it is a (nonzero) projection and so has norm one. Therefore it suffices to show that if $\ph$ is a Schwartz function then $T\,T^*\ph\to\ph$. We have from Lemma \ref{HL1}a that
\[\wh{T\,T^*\ph}(\l)=\sqni\,\wh{T^*\ph}(\l)\]
if $\l\in J$ and equals zero otherwise. If $\l\in J$ then by Lemma \ref{HL1}b it equals $\wh\ph(\l)$. It follows that
\[\|\wh{T\,T^*\ph}-\wh\ph\|^2=\sum_{\l\not\in J}|\wh\ph(\l)|^2.\]
Integrating by parts shows that
\[\wh\ph(\l)=O(\sqn/\l),\]
and so, by our assumption on $J$, the sum on the right side is $O(1)$. Then by (\ref{norm1}) we get
\[\|T\,T^*\ph-\ph\|=O(n^{-1/4}).\]
\end{proof}
Now the work begins. First, an identity. We introduce the notations
\[\C(\xi)=1-\cos(2\pi\xi),\ \ \ S(\xi)=\sin(\pi\xi),\]
and observe that $\C(\xi)=2\,S(\xi)^2$.
\sp

\begin{lemma}\label{HL3} For $u\in L^2(\Z_n)$,
\[(\Mnt\,u,\,u)=n^{-1/2}\,\|S(\l/n)\,\wh u(\l)\|^2+n\,\|S(k/n)\,u(k)\|^2.
\]
\end{lemma}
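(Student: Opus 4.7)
My plan is to decompose $\tilde M_n = m_1 + m_2$ as given in (\ref{m1})--(\ref{m2}) and compute the two contributions to the quadratic form $(\tilde M_n u, u)$ separately, using the identity $\overline C(\xi)=1-\cos(2\pi\xi)=2S(\xi)^2$ in both cases. Throughout I must remember that the inner product on $L^2(\Z_n)$ puts mass $1/\sqn$ on each point, i.e.\ $(u,v)=n^{-1/2}\sum_k u(k)\overline{v(k)}$, so that, for instance, $\|S(k/n)u(k)\|^2=n^{-1/2}\sum_k S(k/n)^2|u(k)|^2$.

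The diagonal (potential) part $m_2$ is immediate: since $\frac{n}{2}(1-\cos(2\pi k/n))=nS(k/n)^2$, one has
\[(m_2 u,u)=\frac{1}{\sqn}\sum_k nS(k/n)^2|u(k)|^2=n\,\|S(k/n)u(k)\|^2,\]
which is the second term in the stated identity. This step is routine; the normalization $n$ rather than $\sqn$ out front comes from the combination $n\cdot n^{-1/2}=n^{1/2}$ being absorbed by rewriting the sum as $n$ times the $L^2(\Z_n)$-norm squared.

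The circulant (Laplacian) part $m_1$ is the one that forces us into Fourier space. Acting on $u$, $m_1$ is the convolution operator $(m_1 u)(j)=\tfrac n2 u(j)-\tfrac n4 u(j-1)-\tfrac n4 u(j+1)$, so under the finite Fourier transform $\widehat{m_1 u}(\l)=\tfrac n2(1-\cos(2\pi\l/n))\,\wh u(\l)=nS(\l/n)^2\,\wh u(\l)$. Now I invoke Parseval in the form (\ref{norm2}), namely $\sum_j u(j)\overline{v(j)}=n^{-1}\sum_\l \wh u(\l)\overline{\wh v(\l)}$, to obtain
\[(m_1 u,u)=\frac{1}{\sqn}\cdot\frac{1}{n}\sum_\l nS(\l/n)^2|\wh u(\l)|^2=n^{-1/2}\,\|S(\l/n)\wh u(\l)\|^2,\]
using the convention that $\|\wh u\|^2=\sum_\l |\wh u(\l)|^2$ on the integer side. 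Adding the two contributions yields the lemma.

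There is no real obstacle, only bookkeeping: the one thing to watch is the clash of normalizations between $\|u\|^2=n^{-1/2}\sum|u(k)|^2$ and $\|\wh u\|^2=\sum|\wh u(\l)|^2$, which are related by (\ref{norm2}). Once the Parseval identity in this form is pinned down and the algebraic identity $\tfrac12\overline C(\xi)=S(\xi)^2$ is applied to both $m_1$ (giving the Fourier symbol $nS(\l/n)^2$) and $m_2$ (giving the multiplication operator $nS(k/n)^2$), the two pieces assemble exactly into the stated expression, and the symmetric-looking identity in the lemma is really the quadratic-form version of the familiar fact that the operator is a Fourier multiplier plus a potential of the same shape.
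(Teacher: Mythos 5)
Your proof is correct and follows essentially the same route as the paper's: split $\tilde M_n$ into the convolution piece $m_1$ and the diagonal piece $m_2$, pass the former to the Fourier side where its symbol is $\tfrac n2\overline C(\l/n)=nS(\l/n)^2$ (the paper phrases this via the shift operators $A_\pm$), handle the latter directly, and track the $n^{-1/2}$ versus unit normalizations of the two norms via (\ref{norm2}). No gaps.
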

\noindent{\it Note.}\ Here and below we display ``$k$'' as the variable in the ambient space $\Z_n$ and ``$\l$'' as the variable in the space $\Z_n$ of the Fourier transform. We abuse notation and, for example, the ``$u(k)$'' above denotes the function $k\to u(k)$.
\begin{proof} We consider first the contribution of (\ref{m1}) to the inner product. If we define the operators $A_\pm$ by
\[(A_\pm u)(k)=u(k\pm 1),\]
we see that the contribution to the inner product is
\[{n\ov2}\,(u-[A_+u+A_-u]/2,u).\]
Now
\[\wh{A_\pm u}(\l)=e^{\pm 2\pi i\l/n}\,\wh u(\l),\]
so if we use (\ref{norm2}) we see that the above is equal to
\[{1\ov2}n^{-1/2}\,(\C(\l/n)\,\wh u,\wh u(\l))=n^{-1/2}\,\|S(\l/n)\,\wh u\|^2.\]

To complete the proof of the lemma we note that the contribution to the inner product of (\ref{m2}) is clearly
\[{n\ov2}\,(\C(k/n)\,u(k),u(k))=n\,\|S(k/n)\,u(k)\|^2.\]
\end{proof}
\begin{lemma}\label{HL4} Suppose $u_n$ satisfy $(\Mnt u_n,u_n)=O(1)$. Then 
(a) $\|x\,Tu_n(x)\|=O(1)$, and (b) $\|(Tu_n)'\|=O(1).$\end{lemma}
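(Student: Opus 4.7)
My plan hinges on Lemma \ref{HL3}, which under the hypothesis $(\Mnt u,u) = O(1)$ and the measure conventions $\|u\|^2 = n^{-1/2}\sum_k|u(k)|^2$, $\|\wh u\|^2 = \sum_\l|\wh u(\l)|^2$ yields two separate bounds
\[\sum_\l \sin^2(\pi\l/n)|\wh u(\l)|^2 = O(\sqn),\qquad \sum_k\sin^2(\pi k/n)|u(k)|^2 = O(1/\sqn).\]
Both parts of the lemma will be reduced to these via the elementary comparison $2|t|\le|\sin(\pi t)|\le \pi|t|$ for $|t|\le 1/2$.

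Part (b) is a short Parseval computation. By Lemma \ref{HL1}(a), $\wh{Tu}(\l) = \wh u(\l)/\sqn$ on $J$ and $0$ off $J$, so $\widehat{(Tu)'}(\l) = (2\pi i\l/\sqn)\wh{Tu}(\l)$; then (\ref{norm1}) gives
\[\|(Tu)'\|^2 = \frac{4\pi^2}{n^{5/2}}\sum_{\l\in J}\l^2|\wh u(\l)|^2 \le \frac{\pi^2}{\sqn}\sum_\l\sin^2(\pi\l/n)|\wh u(\l)|^2 = O(1),\]
using $\l^2\le (n^2/4)\sin^2(\pi\l/n)$, which holds since $|\l/n|\le 1/2$ on $J$.

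For part (a), I rescale by $x=\sqn y$ and set $V(y):=Tu(\sqn y)=\frac{1}{n}\sum_{\l\in J}\wh u(\l)e^{2\pi i\l y}$, a trigonometric polynomial of degree at most $n/2$. Then $\|xTu\|^2 = n^{3/2}\int_{-1/2}^{1/2} y^2|V(y)|^2\,dy$, and the pointwise inequality $y^2\le\sin^2(\pi y)/4$ on $[-1/2,1/2]$ reduces matters to showing $\int\sin^2(\pi y)|V|^2\,dy = O(n^{-3/2})$. Since $|V|^2$ has degree at most $n-1$, the integrand is a trig polynomial of degree at most $n$, and the $n$-point trapezoidal sum at $y=k/n$ equals the integral modulo aliasing only from the Fourier modes $\pm n$. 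The interpolation identity $V(k/n)=u(k)$, which follows from $D_n(j/n)=n$ when $n\mid j$ and $0$ otherwise, identifies the Riemann sum with $\frac{1}{n}\sum_k\sin^2(\pi k/n)|u(k)|^2 = O(n^{-3/2})$ by the second bound above.

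The remaining technical point, which I expect to be the main obstacle, is controlling the aliasing correction. Up to sign it equals the extreme Fourier coefficient $b_{n-1}$ of $|V|^2$, which in turn is $a_{\max J}\,\overline{a_{\min J}}$ with $a_\l = \wh u(\l)/n$. Since $\sin^2(\pi\l/n)\geq 1/2$ at both endpoints of $J$, Lemma \ref{HL3} bounds each boundary coefficient individually by $|\wh u(\l)|^2=O(\sqn)$, giving $|b_{n-1}|=O(n^{-3/2})$. This confirms the aliasing is of the same order as the main term, and combining yields $\|xTu\|^2 \le (n^{3/2}/4)\cdot O(n^{-3/2}) = O(1)$.
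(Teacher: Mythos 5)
Your proposal is correct, and both parts rest on exactly the two estimates the paper extracts from Lemma \ref{HL3}, namely $\sum_\ell \sin^2(\pi\ell/n)|\widehat{u}(\ell)|^2=O(n^{1/2})$ and $\sum_k\sin^2(\pi k/n)|u(k)|^2=O(n^{-1/2})$, together with $2|t|\le|\sin(\pi t)|$ on $[-1/2,1/2]$. Part (b) is the paper's argument verbatim. For part (a) your packaging differs: the paper multiplies $Tu_n$ by $1-e^{-2\pi i x/\sqrt{n}}$, identifies the resulting Fourier coefficients with the first differences $\widehat{Tu_n}(\ell)-\widehat{Tu_n}(\ell+1)$, and applies Parseval, whereas you compute $\int\sin^2(\pi y)|V(y)|^2\,dy$ by exact $n$-point quadrature of a trigonometric polynomial of degree $\le n$, using the interpolation identity $V(k/n)=u(k)$ and correcting for aliasing at frequencies $\pm n$. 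These are two faces of the same computation, and tellingly both must dispose of the same boundary contribution in the same way: your aliasing coefficient $b_{n-1}=n^{-2}\widehat{u}(\max J)\overline{\widehat{u}(\min J)}$ plays precisely the role of the paper's endpoint case ($\ell\in J$, $\ell+1\notin J$), and both are controlled by observing that $|\sin(\pi\ell/n)|$ is bounded below at the endpoints of $J$ (here the standing assumption that $J$ has endpoints $\pm n/2+O(1)$ is essential), whence $|\widehat{u}(\ell)|^2=O(n^{1/2})$ there. Your quadrature formulation is arguably cleaner in that the error term is a single explicit coefficient rather than a sum over boundary indices, while the paper's difference-operator version generalizes more readily when one perturbs $J$ or the weight; substantively the two proofs are equivalent and your constants and exponents all check out.
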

\begin{proof}[Proof of (a)] We have 
\[\wh u_n(\l)-\wh u_n(\l+1)=\sum_k e^{-2\pi \l k/n}(1-e^{-2\pi i k/n})\,u_n(k),\]
the finite Fourier transform of $(1-e^{-2\pi i k/n})\,u_n(k)$.
We have,
\[|(1-e^{-2\pi i k/n})\,u_n(k)|=2|S(k/n)\,u_n(k)|.\]
Therefore from (\ref{norm2}) and 
\[\|S(k/n)\,u_n(k)\|=O(n^{-1/2}),\]
which follows from Lemma \ref{HL3}, we get
\[\|\wh u_n(\l)-\wh u_n(\l+1)\|=O(n^{1/4}).\]
It follows from Lemma \ref{HL1}a that
$\wh{Tu_n}(\l)=\wh{Tu_n}(\l+1)=0$
if both $\l,\l+1\not\in J$ and
\be\sum_{\l,\,\l+1\in J}|\wh{Tu_n}(\l)-\wh{Tu_n}(\l+1)|^2=O(n^{-1/2}).\label{Tuest}\ee
If $\l\in J$ but $\l+1\not\in J$ then $\wh{Tu_n}(\l+1)=0$ and $\l$ is the right end-point of $J$ and therefore $n/2+O(1)$. From
\be\|S(\l/n)\,\wh u_n(\l)\|=O(n^{1/4}),\label{Sln}\ee
which also follows from Lemma \ref{HL3}, and that $|S(\l/n)|$ is bounded below for $\l=n/2+~O(1)$, we have in particular that $\wh u_n(\l)=O(n^{1/4})$, Therefore 
$\wh{Tu_n}(\l)=O(n^{-1/4})$.
 
So the bound in (\ref{Tuest}) holds when the sum is taken over all $\l\in\Z$. Since 
\[\wh{Tu_n}(\l)-\wh{Tu_n}(\l+1)=\int e^{-2\pi i x\l/\sqn}(1-e^{-2\pi i x/\sqn})\,Tu_n(x)\,dx,\]
it follows from (\ref{norm1}) that
\[\|S(x/\sqn)\,Tu_n(x)\|=O(n^{-1/2}).\]
In the interval of integration $|x|<\sqn/2$, so $|S(x/\sqn)|$ is bounded below by a constant times $|x|/\sqn$. This gives (a).
\end{proof}

\begin{proof}[Proof of (b)] We have 
\[(Tu_n)'(x)={2\pi i\ov n^{3/2}}\sum_{k,\,\l\in J}\l\,e^{2\pi i\l(x/\sqn-k/n)}\,u_n(k)={2\pi i\ov n^{3/2}}\sum_{\l\in J}\l\,e^{2\pi i\l x/\sqn}\,\wh{u_n}(\l).\]
Thus $\wh{Tu_n'}(\l)=2\pi i\l\,\wh u_n(\l)/n$ for $\l\in J$, and it follows from (\ref{norm1}) that
\[\|(Tu_n)'\|^2={4\pi^2\ov n^{5/2}}\sum_{\l\in J}\l^2\,|\wh{u_n}(\l)|^2.\]
Now $|S(\l/n)|$ is bounded below by a constant times $|\l/n|$ for $\l\in J$, so (\ref{Sln}) implies that
\[\sum_{\l\in J}\l^2\,|\wh {u_n}(\l)|^2=O(n^{5/2}),\]
which gives the result.
\end{proof}

\begin{proof}[Proof of Proposition \ref{HP1}] Since $T$ is an isometry each $\|Tu_n\|=1$, and by passing to a subsequence we may assume $\{Tu_n\}$ converges weakly to some $f\in L^2(\R)$. We use the fact that strong convergence will follow if we can show that $\|f\|\ge1$. 
(In general, if $\|f_n\|=1$ and $f_n\to f$ weakly, then $\|f\|\ge1$ implies that $f_n\to f$ strongly. Here is the argument.  We have that  \[\|f_n-f\|^2=\|f_n\|^2+\|f\|^2-2\,{\rm Re}\,(f_n,f).\]
By weak convergence, $(f_n,f)\to \|f\|^2$. Therefore
\[\|f_n-f\|^2\to 1-\|f\|^2\le 0,\]so $\|f_n-f\|\to 0.$)

The hypothesis of Lemma \ref{HL4} is satisfied. It follows from Lemma \ref{HL4}a that for each $\ep>0$ there is a  bounded interval $A$ such that $$\|(1-\ch_A)\,Tu_n\|\le \ep$$ for all $n$. So $\|\ch_A\,Tu_n\|\ge1-\ep.$ It follows from Lemma \ref{HL4}b that $\{Tu_n\}$ is equicontinuous, and this combined with $\|\ch_A\,Tu_n\|\le 1$ shows that a subsequence of $\{Tu_n\}$ converges uniformly on $A$ (to $\ch_A\,f$), and so 
$\|\ch_A\,f\|\ge 1-\ep$. Thus, $\|f\|\ge 1$.
\end{proof}

\begin{proof}[Proof of Proposition \ref{HP2}] Consider first the operator corresponding to $m_1$ in (\ref{m1}). We call it $n\D_k^2$. (The subscript indicates that it acts on functions of $k$.) We show first that 
\[n\,T\,\D_k^2\, T^*\ph\to-\ph''/4\]
in $L^2(\R)$. We have
\begin{align*}
n\,(T\, \D_k^2\, T^*\ph)(x)&\\
&\hspace{-1cm}={1\ov 2n^{1/2}}\sum_{\l,\l',k}
e^{2\pi i\,\l((x/\sqn-k/n)}\int_{-\sqn/2}^{\sqn/2} \D_k^2\,
e^{2\pi i\,\l'((k/n-y/\sqn)}\,\ph(y)\,dy\end{align*}
The exponent in the integral is a function of $k/\sqn-y$. So taking the second difference $\D_k^2$ in $k$ is the same as taking the second difference $\D_y^2$ in $y$ as long as the differences in the $y$-variable are $1/\sqn$. With this understanding, the above equals
\[{1\ov 2n^{1/2}}\sum_{\l,\l',k}
e^{2\pi i\,\l((x/\sqn-k/n)}\int_{-\sqn/2}^{\sqn/2} \D_y^2\,
e^{2\pi i\,\l'((k/n-y/\sqn)}\,\ph(y)\,dy.\] 
By changing variables in two of the three summands from $\D_y^2$ we can put the $\D_y^2$ in front of the $\ph(y)$. There is an error because of the little change of integration domains but (for $\ph$ a Schwartz function) this is a rapidly decreasing function of $n$, and so can be ignored. After this what we get is $nT\,T^*\,\D^2\,\ph$. Taylor's theorem gives
\[n(\D_x^2\,\ph)(x)=-\ph''(x)/4+O\(n^{-1/2}\,\max_{|y-x|<1/\sqn}|\ph'''(y)|\),\]
from which it follows that $n\,(\D_x^2\,\ph)(x)\to -\ph''(x)/4$ strongly. Since $T\,T^*\to I$ strongly we deduce that $n\,T\, \D_k^2\, T^*\ph\to-\ph''/2$ strongly.

Lastly, consider the operator corresponding to (\ref{m2}), which is multiplication by $n\,\C(k/n)/2=n(1-\cos(2\pi k/n))/2$. For convenience we call this operator $\C_n/2$. 

By Lemma \ref{HL1}b we know that $\wh{T^*\ph}(\l)=\sqn\,\wh\ph(\l)$ when  $\l\in J$.
In general,
\[\wh{\C_n\,u}(\l)=n\,\sum_k (1-\cos(2\pi k/n))\,e^{-2\pi ik\l/n}\,u(k)=
n\,(\D^2\,\wh u)(\l),\]
where here
\[(\D^2\,\wh u)(\l)=\wh u(\l)-[\wh u(\l-1)+\wh u(\l+1)]/2.\]
Applying this to $T^*\ph$ gives
\[\wh{\C_n\,T^*\ph}(\l)=n\,\D^2\,\wh{T^*\ph}(\l)=n^{3/2}\,\D^2\,\wh\ph(\l)\]
as long as $\l\pm 1$ are also in $J$. If $\l$ is in $J$ but one of $\l\pm1$ is not in $J$ then $\l$ is near an end-point of $J$ and the error committed will be rapidly decreasing as $n\to\iy$. For the right side is $n^{3/2}$ times a linear combination of integrals like
\[\int_{-\sqn/2}^{\sqn/2} e^{\pm\pi ix\sqn}\,\ph(x)\,dx,\]
and integration by parts many time shows this is rapidly decreasing. For the left side, if for example $\l+1\not\in J$ then it is the same as the value at $\l+1-n\in J$, which is rapidly decreasing.
So we ignore this little error and use  
\[\D^2\,\wh\ph(\l)=\int_{-\sqn/2}^{\sqn/2} e^{-2\pi ix\l/\sqn}\,\C(x/\sqn)\,\ph(x)\,dx.\]
We also have $\wh{T^*x^2\ph}(\l)=\sqn\,\wh{x^2\ph}(\l)$ for $\l\in J$. Thus (ignoring the error),
\[\wh{\C_n\,T^*\ph}(\l)-2\pi^2\,\wh{T^*x^2\ph}(\l)=\sqn\,\int_{-\sqn/2}^{\sqn/2} e^{-2\pi ix\l/\sqn}\,(n\,\C(x/\sqn)-2\pi^2 x^2)\,\ph(x)\,dx.\]
From this we get 
\[\|\wh{\C_n\,T^*\ph}-2\pi^2\,\wh{T^*x^2\ph}\|\le n^{3/4}\,\|(n\,\C(x/\sqn)-2\pi^2 x^2)\,\ph(x)\|=O(n^{-1/4}),\]
since $n\,\C(x/\sqn)-2\pi^2 x^2=O(x^4/n)$ and $x^4\,\ph(x)\in L^2(\R)$. Then we get from (\ref{norm2})
\[\|\C_n\,T^*\ph-2\pi^2\,T^*x^2\ph\|=O(n^{-1}).\]
Equivalently,
\[\|T\,\C_n\,T^*\ph-2\pi^2\,TT^*x^2\ph\|=O(n^{-1}).\]
Since $TT^*\to I$ strongly, this gives
\[\|T\,\C_n\,T^*\ph-2\pi^2\,x^2\ph\|\to0.\]
Since the operator corresponding to (\ref{m2}) is multiplication by $\C_n/2$, this completes the proof of Proposition \ref{HP2}.
\end{proof}
Now we go back to the eigenvalues of $\Mnt$ and easy consequences of Propositions \ref{HP1} and \ref{HP2}.

\begin{proposition}\label{HP3} (a) If $\la_n$ are eigenvalues of $\Mnt$ and $\la_n\to\mu$, then $\mu$ is an eigenvalue of~$L$. (b) Any eigenvalue $\m$ of $L$ has a neighborhood that contains at most one eigenvalue (counting multiplicity) of $\Mnt$ for sufficiently large $n$.\sp
\end{proposition}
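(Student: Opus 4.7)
The plan is to deduce both parts from the compactness result (Proposition \ref{HP1}), the operator approximation (Proposition \ref{HP2}), and the basic identities $T^{\ast}T=I$ and $TT^{\ast}\to I$ strongly (Lemma \ref{HL2}).

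For part (a), let $u_n$ be a unit eigenvector of $\tilde{M_n}$ with eigenvalue $\lambda_n\to\mu$. Then $(\tilde{M_n}u_n,u_n)=\lambda_n$ is bounded, so Proposition \ref{HP1} produces a subsequence along which $Tu_n\to f$ strongly in $L^2(\mathbb{R})$; since $T$ is an isometry, $\|f\|=1$. I would then verify that $f$ is an eigenvector of $L$ with eigenvalue $\mu$ by pairing against Schwartz functions. Using $T^{\ast}T=I$ and the self-adjointness of $\tilde{M_n}$,
\[
(Tu_n,\,T\tilde{M_n}T^{\ast}\varphi)=(u_n,\,\tilde{M_n}T^{\ast}\varphi)=(\tilde{M_n}u_n,\,T^{\ast}\varphi)=\lambda_n\,(Tu_n,\,\varphi).
\]
Passing to the limit along the subsequence and using Proposition \ref{HP2} to replace the left-hand inner product by $(f,L\varphi)$, one obtains $(f,L\varphi)=\mu(f,\varphi)$ for every Schwartz $\varphi$. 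Since $L$ is essentially self-adjoint on the Schwartz space, this forces $f\in\mathrm{Dom}(L)$ with $Lf=\mu f$; as $f\neq 0$, $\mu$ is an eigenvalue of $L$.

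For part (b), I would argue by contradiction, exploiting that the eigenvalues of $L$ are simple. Choose $\delta>0$ so small that $[\mu-\delta,\mu+\delta]$ contains no other eigenvalue of $L$, and suppose that for arbitrarily large $n$ there are at least two eigenvalues of $\tilde{M_n}$ (counted with multiplicity) in this interval. Self-adjointness of $\tilde{M_n}$ lets us pick orthonormal eigenvectors $u_n^{(1)},u_n^{(2)}$ with eigenvalues $\lambda_n^{(1)},\lambda_n^{(2)}\in[\mu-\delta,\mu+\delta]$. After passing to a subsequence, both eigenvalue sequences converge in the compact interval, and applying the argument of part (a) to each one produces strong $L^2$-limits $f_1,f_2$, both unit eigenvectors of $L$ with eigenvalues in $[\mu-\delta,\mu+\delta]$. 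By the choice of $\delta$, both eigenvalues equal $\mu$, so $f_1$ and $f_2$ lie in the one-dimensional $\mu$-eigenspace of $L$. However, the identity $T^{\ast}T=I$ gives
\[
(f_1,f_2)=\lim_{n}(Tu_n^{(1)},Tu_n^{(2)})=\lim_{n}(u_n^{(1)},u_n^{(2)})=0,
\]
producing two orthogonal nonzero vectors in a one-dimensional space, which is the desired contradiction.

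The main obstacle is the ``weak-implies-classical'' step at the end of part (a): one must justify that $(f,L\varphi)=\mu(f,\varphi)$ for all Schwartz $\varphi$ genuinely forces $f$ to lie in the domain of the self-adjoint closure of $L$ with $Lf=\mu f$. This relies on the standard fact that the harmonic oscillator is essentially self-adjoint on the Schwartz space (so that $\mathcal{S}(\mathbb{R})$ is a core for $L$) and should be cited explicitly. Once that is granted, the rest is routine bookkeeping with the compactness result \ref{HP1}, the approximation \ref{HP2}, and the isometry/near-projection identities for $T$.
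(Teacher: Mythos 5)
Your proposal is correct and follows essentially the same route as the paper: Proposition \ref{HP1} for strong subsequential convergence of $Tu_n$, the duality computation $(Tu_n, T\Mnt T^*\varphi)=\lambda_n(Tu_n,\varphi)$ via $T^*T=I$ and self-adjointness, Proposition \ref{HP2} to pass to $(f,L\varphi)=\mu(f,\varphi)$, and for (b) the same orthogonality-versus-simplicity contradiction. The only divergence is the last step of (a): where you invoke essential self-adjointness of $L$ on the Schwartz space, the paper argues more elementarily by pairing $f$ against the complete system of Hermite eigenfunctions $\varphi_i$ (which are Schwartz) to conclude directly that $\mu=\mu_k$ and $f$ is a multiple of $\varphi_k$; both justifications are valid.
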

\begin{proof}[Proof of (a)] Suppose that $u_n$ is an eigenvector of $\Mnt$ of norm one with eigenvalue $\la_n$. In particular $(\Mnt\,u_n,u_n)=\la_n$. By Proposition \ref{HP1} there is a subsequence of $\{Tu_n\}$ that converges strongly to some $f\in L^2(\R)$.
For a Schwartz function $\ph$ we have
\[
(\mu f,\ph)=\lim\,(\la_n\,Tu_n,\ph)=\lim\,(T\Mnt u_n,\ph)\]
\[=\lim\,(u_n,\Mnt T^*\ph)=\lim\,(Tu_n,T\Mnt T^*\ph),\]
since $T$ is an isometry. By \mbox{Proposition~\ref{HP2}}, $T\Mnt T^*\ph$ converges strongly to $L\ph$. Therefore\footnote{For this we need only weak convergence of one and strong convergence of the other. But we also need that $f\ne0$, which is no easier to show than strong convergence of $Tu_n$, and we shall need strong convergence of $T\Mnt T^*\ph$ for Proposition~\ref{HP4}.} the limit equals $(f,L\ph)$, and we have shown
\[\mu (f,\ph)=(f,L\ph).\] 

It follows that $f$ is an eigenfunction of $L$ with corresponding eigenvalue $\mu$. Here is why. The eigenfunctions of $L$ are the harmonic oscillator wave functions $\ph_i$, and therefore Schwartz functions. If the corresponding eigenvalues are $\mu_i$, then  
\[(\mu-\mu_i)\,(f,\ph_i)=\mu\,(f,\ph_i)-(f,L\ph_i)=0.\] 
Since the $\ph_i$ are complete and $f\ne0$, some $\mu-\mu_k=0$. And $f$, being orthogonal to the $\ph_i$ with $i\ne k$, must be a multiple of $\ph_k$ and therefore a corresponding eigenfunction.\sp
\end{proof}
\begin{proof}[Proof of (b)] Suppose the contrary were true. Then there would be sequences of eigenvalues $\{\la_n\}$ and $\{\la'_n\}$ of $\Mnt$, both converging to $\mu$, and corresponding orthogonal (since $\Mnt$ is self-adjoint) eigenfunctions $u_n$ and $u_n'$. The strong (sub)limits $f$ and $f'$ of $Tu_n$ and $Tu_n'$ would be mutually orthogonal eigenfunctions of $L$ corresponding to the same eigenvalue $\mu$ of $L$. Since the eigenvalues of $L$ are simple, this cannot happen.\sp
\end{proof}
\begin{proposition}\label{HP4} For each eigenvalue $\mu$ of $L$ there is a sequence of eigenvalues $\la_n$ of $\Mnt$ that converges to $\mu$.\end{proposition}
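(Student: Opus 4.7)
The plan is to invert the direction of Proposition~\ref{HP3}: rather than extract a limit from a given eigenvalue sequence of $\Mnt$, for each Hermite eigenfunction $\ph_k$ of $L$ with eigenvalue $\m_k$ I build a good trial vector in $L^2(\Z_n)$, feed it into the min-max principle, and then match limits using Proposition~\ref{HP3}. The natural trial vector is $u_n=T^*\ph_k\in L^2(\Z_n)$; since every Hermite function is Schwartz, Proposition~\ref{HP2} and the strong convergence $TT^*\to I$ of Lemma~\ref{HL2} apply directly.

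First, a Rayleigh-quotient calculation. Because $T$ is an isometry,
\[(\Mnt\,u_n,\,u_n)_{L^2(\Z_n)} \;=\; (T\Mnt T^*\ph_k,\,TT^*\ph_k)_{L^2(\R)},\]
and by Proposition~\ref{HP2} the first factor tends strongly to $L\ph_k=\m_k\ph_k$, while by Lemma~\ref{HL2}(b) the second tends strongly to $\ph_k$. Hence the inner product converges to $\m_k$, and similarly $\|u_n\|^2=(\ph_k,TT^*\ph_k)\to 1$, so the Rayleigh quotient along $u_n$ tends to $\m_k$.

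To promote this into control on the $k$th eigenvalue itself, apply min-max on the trial subspace $V_n:=\mathrm{span}(T^*\ph_1,\ldots,T^*\ph_k)$. The same two ingredients give pairwise limits
\[(T^*\ph_i,T^*\ph_j)_{L^2(\Z_n)}\to\delta_{ij},\qquad (\Mnt T^*\ph_i,T^*\ph_j)_{L^2(\Z_n)}\to\m_i\,\delta_{ij},\]
where the off-diagonal terms vanish because the $\ph_i$ are orthogonal eigenfunctions of the self-adjoint $L$. For large $n$ the Gram matrix is therefore invertible, so $V_n$ is genuinely $k$-dimensional, and the generalized Rayleigh quotient on $V_n$ is at most $\m_k+o(1)$. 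By min-max this yields $\la_k(\Mnt)\le \m_k+o(1)$, where $\la_k$ denotes the $k$th smallest eigenvalue.

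Finally, combine with Proposition~\ref{HP3} to promote the upper bound into convergence. The eigenvalues of $\Mnt$ are nonnegative (since $\|M_n\|\le 1$ by the trivial Weyl bound) and the tuple $(\la_1(\Mnt),\ldots,\la_k(\Mnt))$ is bounded above by $\m_k+o(1)$, so along any subsequence it converges to some $(\nu_1\le\cdots\le\nu_k)$ with $\nu_j\le \m_j$. By Proposition~\ref{HP3}(a) each $\nu_j$ is an eigenvalue of $L$; by Proposition~\ref{HP3}(b) no two $\nu_j$ can coincide, else two orthogonal eigenvectors of $\Mnt$ would produce two eigenvalues counting multiplicity in an arbitrarily small neighborhood of their common limit. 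Hence $\nu_1<\cdots<\nu_k$ is a strictly increasing selection of eigenvalues of $L$ with $\nu_j\le\m_j$, which forces $\nu_j=\m_j$. The limit is independent of subsequence, so $\la_k(\Mnt)\to\m_k$, producing the required sequence. The main obstacle is the min-max bookkeeping, in particular the off-diagonal cancellation in the quadratic form and the nondegeneracy of the Gram matrix, which together ensure that $V_n$ really does cap the $k$th eigenvalue rather than only the first; once those are in hand, Propositions~\ref{HP2} and~\ref{HP3} do the rest.
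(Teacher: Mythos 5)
Your proof is correct, but it takes a genuinely different route from the paper's. The paper's argument is a direct quasimode (residual) estimate: from Proposition~\ref{HP2} and Lemma~\ref{HL2}(b) it deduces $\|T(\Mnt T^*\ph-\mu T^*\ph)\|=o(1)$, uses the isometry of $T$ to drop the outer $T$, and then invokes the standard fact that for a self-adjoint operator a small residual $\|\Mnt v-\mu v\|=o(1)$ with $\|v\|$ bounded away from zero forces an eigenvalue of $\Mnt$ within $o(1)$ of $\mu$ --- no appeal to Proposition~\ref{HP3} and no trial subspace is needed, and as a bonus $T^*\ph$ is identified as an approximate eigenvector. You instead run a min-max argument on $V_n=\mathrm{span}(T^*\ph_1,\dots,T^*\ph_k)$, verify that the Gram and quadratic-form matrices converge to $I$ and $\mathrm{diag}(\mu_1,\dots,\mu_k)$ respectively (the key bookkeeping, which you handle correctly via the isometry, Proposition~\ref{HP2}, and $TT^*\to I$), conclude $\la_k(\Mnt)\le\mu_k+o(1)$, and then use both parts of Proposition~\ref{HP3} together with nonnegativity of $\Mnt$ to squeeze the subsequential limits $\nu_j$ up to $\mu_j$. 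Your version costs more machinery but delivers the ordered statement $\la_k(\Mnt)\to\mu_k$ (i.e., the content of Theorem~\ref{PT2}) in one pass, whereas the paper obtains that only after combining Propositions~\ref{HP3} and~\ref{HP4}; the paper's version is shorter, does not consume Proposition~\ref{HP3}, and yields the eigenvector approximation explicitly. Both are complete proofs of the stated proposition.
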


\begin{proof}With corresponding eigenfunction $\ph$ of $L$ we have, by Proposition 2, 
\[\|T\Mnt T^*\ph-\mu\,\ph\|=\|T\Mnt T^*\ph-L\,\ph\|=o(1).\]
From this, and that $\|\ph-T\,T^*\,\ph\|=o(1)$ by Lemma \ref{HL2}b, 
we get
\[\|T\,(\Mnt T^*\ph-\mu\,T^*\ph)\|=o(1).\]
Since $T$ is an isometry this is the same as
\[\|\Mnt T^*\ph-\mu\,T^*\,\ph\|=o(1).\]
Since $\|T^*\ph\|=\|T\,T^*\ph\|\to\|\ph\|\ne0$, and the other eigenvalues of $L$ are bounded away from $\mu$, this implies that $\mu$ is within $o(1)$ of an eigenvalue of $\Mnt$ and $T^*\,\ph$ within $o(1)$ of an eigenvector.  See \cite{DPG}.
\end{proof}
Combining Propositions \ref{HP3} and \ref{HP4} shows that the $k$th largest eigenvalue of $M_n$ equals $1-\mu_k/n+o(1/n)$.
\subsection{The bottom eigenvalues of $M_n$}\label{HS2}

We shall find a unitary operator $U$ on $L^2(\Z_n)$ such that the quadratic form for $n(I+U_nM_nU_n^*)$ is the same as for $n(I-M_n)$ when $n$ is even and close to it when $n$ is odd. From that it will follow that the $k$th bottom eigenvalue of $M_n$ equals $-1+\mu_k/n+o(1/n)$.

Recall that Lemma \ref{HL3} says that
\[(n(I-M_n)\,u,\,u)=n^{-1/2}\,\|S(\l/n)\,\wh u\|^2+n\,\|S(k/n)\,u(k)\|^2=:Q(u),\]
where $S(\xi)=\sin(\pi\xi)$. For this we used the identity
$1-\cos \xi=2 \sin^2(\xi/2)$.
For $n(I+M_n)$ this gets replaced by $1+\cos \xi=2 \cos^2(\xi/2)$. So now we define 
\[C(\xi)=\cos(\pi\xi),\]
and get
\[(n(I+M_n)\,u,\,u)=n^{-1/2}\,\|C(\l/n)\,\wh u\|^2+n\,\|C(k/n)\,u(k)\|^2.\]

We consider first the less straightforward case of $n$ odd and define
\[v(k)=e^{2\pi i\a k}\,u\(k-(n+1)/2\),\]
where $\a$ (real) will be determined below.
We have
\[\|C(k/n)\,u(k)\|=\|C({k/n+1/2+1/2n})\,v(k)\Big\|=\|S(k/n+1/2n)\,v(k)\|.\]

Next,
\[\wh v(\l)=\sum_k e^{-2\pi ik\l/n}\,e^{2\pi i\a k}\,u\(k-(n+1)/2\)\]
\[=\sum_k e^{-2\pi i(k+(n+1)/2)\l/n}\,e^{2\pi i\a (k+(n+1)/2)}\,u(k)\]
\[=(-1)^\l\,e^{-\pi i\l/n}\,e^{\pi i\a (n+1)}\,\sum_k e^{-2\pi i k\l/n}\,e^{2\pi i\a k}\,u(k)\]
We choose $\a=-(n+1)/2n$. The factor outside the sum has absolute value 1 while the sum becomes $\wh u(\l+(n+1)/2)$. Alternatively,
\[|\wh u(\l)|=|\wh v(\l-(n+1)/2)|.\]
Therefore
\[\|C(\l/n)\,\wh u(\l)\|=\|C(\l/n+1/2+1/2n)\,\wh v(\l)\|=\|S(\l/n+1/2n)\,\wh v(\l)\|.\]

The map $U:u\to v$ is unitary and we have shown
\sp

\begin{lemma}\label{HL5} For $n$ odd we have,
\[(n(I+UM_nU^*)\,v,\,v)=n^{-1/2}\,\|S(\l/n+1/2n)\,\wh v(\l)\|^2+n\,\|S(k/n+1/2n)\,v(k)\|^2.\]
\end{lemma}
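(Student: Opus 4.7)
The plan is to reduce $(n(I+UM_nU^*)v,v)$ to $(n(I+M_n)u,u)$ via unitarity, evaluate the latter using the $I+M_n$ analog of Lemma \ref{HL3}, and then transport each term back into the $v$ variables using the explicit relations between $u$ and $v$. The three ingredients---the $I+M_n$ version of Lemma \ref{HL3}, the unitarity of $U$, and the two transferred-norm identities---are already essentially assembled from material in the paragraphs immediately preceding the statement.

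First I would write down the $I+M_n$ analog of Lemma \ref{HL3}. The derivation is identical to that of Lemma \ref{HL3} except that in the contribution of $m_1$ the identity $1-\cos\xi = 2\sin^2(\xi/2)$ is replaced by $1+\cos\xi = 2\cos^2(\xi/2)$, and in the contribution of $m_2$ the factor $n(1-\cos(2\pi k/n))$ is replaced by $n(1+\cos(2\pi k/n))$. Both changes swap $S(\cdot)$ for $C(\cdot)$, giving
\[(n(I+M_n)u,u) = n^{-1/2}\|C(\ell/n)\,\widehat{u}(\ell)\|^2 + n\,\|C(k/n)\,u(k)\|^2.\]

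Next I would observe that the map $U$ is the composition of a translation $u(k)\mapsto u(k-(n+1)/2)$ and a modulation $u(k)\mapsto e^{2\pi i\alpha k}u(k)$; both are isometries of $L^2(\mathbb{Z}_n)$ when $n$ is odd, since then $(n+1)/2$ is an integer and $\alpha n = -(n+1)/2$ is an integer so $e^{2\pi i\alpha k}$ is a well-defined character of $\mathbb{Z}_n$. Hence $U$ is unitary and setting $u = U^*v$ gives
\[(n(I+UM_nU^*)v,v) = (n(I+M_n)u,u).\]

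Finally, I would substitute the two identities
\[\|C(k/n)\,u(k)\| = \|S(k/n+1/(2n))\,v(k)\|,\qquad \|C(\ell/n)\,\widehat{u}(\ell)\| = \|S(\ell/n+1/(2n))\,\widehat{v}(\ell)\|,\]
already derived in the paragraphs preceding the statement from $|u(k)|=|v(k+(n+1)/2)|$, $|\widehat u(\ell)|=|\widehat v(\ell-(n+1)/2)|$, the half-shift identity $|C(\xi+1/2)|=|S(\xi)|$, and the periodicities of $|C|^2$ and $|S|^2$. These plug directly into the displayed formula above to yield the claim. The main obstacle is the sign and index bookkeeping beneath these two transferred-norm identities: the value $\alpha=-(n+1)/(2n)$ is tuned precisely so that the modulation factor cancels the unwanted phase introduced by the translation on the Fourier side, and the interaction between the half-integer shift $1/2$ and the fine offset $1/(2n)$ must land on exactly the form stated in the lemma---explaining both why $n$ odd is essential and why the even case requires a different (simpler) choice of $U$.
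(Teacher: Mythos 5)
Your proposal is correct and follows essentially the same route as the paper: it uses the $C$-version of the quadratic form identity from Lemma \ref{HL3} (via $1+\cos\xi=2\cos^2(\xi/2)$), the unitarity of the translation-plus-modulation map $U$ (with the same check that $\alpha n=-(n+1)/2\in\mathbb{Z}$ for $n$ odd), and the two transferred-norm identities with the tuned phase $\alpha=-(n+1)/(2n)$. The paper in fact assembles exactly these ingredients in the paragraphs before the lemma and concludes by the same substitution you describe.
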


\begin{remark}. When $n$ is even we replace the shift $(n+1)/2$ by $n/2$ and $\a$ by $-1/2$, and the extra $1/2n$'s do not appear in the arguments of the $S$'s. The quadratic form becomes $Q(v)$ exactly, so $U_nM_nU_n^*=-M_n$.\footnote{This is easy to see directly.} 
\end{remark}

\begin{proposition}\label{HP5} 
$(n(I+UM_nU^*)\,v,\,v)=(1+O(n^{-1/2}))\,Q(v)+O(n^{-1/2}\|v\|^2)$.
\end{proposition}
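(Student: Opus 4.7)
The plan is to reduce the proposition to a direct comparison of the two quadratic forms in Lemma \ref{HL5} and in the preceding formula for $Q(v)$. The only difference between the two is that each occurrence of $S(\xi)$ (with $\xi=\ell/n$ or $\xi=k/n$) is replaced by $S(\xi+1/(2n))$. I would apply the angle addition formula
\[S(\xi+1/(2n))=\cos(\pi/(2n))\,S(\xi)+\sin(\pi/(2n))\,C(\xi),\]
square, and use $\cos^2(\pi/(2n))=1+O(n^{-2})$, $\sin^2(\pi/(2n))=O(n^{-2})$, and $\cos(\pi/(2n))\sin(\pi/(2n))=O(n^{-1})$ to obtain the pointwise identity
\[S(\xi+1/(2n))^2=S(\xi)^2+O(n^{-2})\,S(\xi)^2+O(n^{-1})\,S(\xi)\,C(\xi)+O(n^{-2})\,C(\xi)^2.\]
Summed against $|\hat v(\ell)|^2$ (respectively $|v(k)|^2$), the cross term is the one that has to be controlled; for this I would use Cauchy--Schwarz in the form $|\langle SC\hat v,\hat v\rangle|\le\|S\hat v\|\,\|\hat v\|$ and, analogously, $|\langle SCv,v\rangle|\le\|Sv\|\,\|v\|$.

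Next, I would rescale everything in terms of $Q(v)$ and $\|v\|$ using the relations already in hand: by Lemma \ref{HL3}, $\|S(\ell/n)\hat v\|^2\le n^{1/2}Q(v)$ and $\|S(k/n)v\|^2\le n^{-1}Q(v)$, while $\|C(\ell/n)\hat v\|^2\le\|\hat v\|^2=n^{3/2}\|v\|^2$ (from (\ref{norm2})) and $\|C(k/n)v\|^2\le\|v\|^2$. Substituting these into the two pieces of Lemma \ref{HL5} gives for the frequency piece
\[n^{-1/2}\|S(\ell/n+1/(2n))\hat v\|^2=n^{-1/2}\|S(\ell/n)\hat v\|^2+O(n^{-2})Q(v)+O(n^{-1/2})\sqrt{Q(v)}\,\|v\|+O(n^{-1})\|v\|^2,\]
and an analogous bound for the position piece, where the bookkeeping of the powers of $n$ is the only substantive check; all the $O(n^{-2})$ and $O(n^{-1})$ error terms are trivially absorbed.

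Finally, the cross products of the form $\sqrt{Q(v)}\,\|v\|$ are handled by the elementary inequality $\sqrt{Q(v)}\,\|v\|\le\tfrac12\bigl(Q(v)+\|v\|^2\bigr)$, which converts the $O(n^{-1/2})\sqrt{Q(v)}\,\|v\|$ contribution into $O(n^{-1/2})Q(v)+O(n^{-1/2})\|v\|^2$. Adding the two pieces and using the formula for $Q(v)$ then yields
\[(n(I+UM_nU^*)v,v)=Q(v)+O(n^{-1/2})Q(v)+O(n^{-1/2})\|v\|^2,\]
which is the claim.

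The main obstacle is simply keeping track of the $n^{1/2}, n^{3/4}, n^{3/2}$ scalings forced by the unusual normalizations of $\|\hat v\|$ and of $Q$ in this section, and making sure the cross term $\langle SCf,f\rangle$ is estimated by $\|Sf\|\,\|f\|$ rather than by $\|f\|^2$ (the latter would lose a power of $n^{1/2}$ and produce an error of the wrong order). Once those bookkeeping steps are done correctly, the result is immediate.
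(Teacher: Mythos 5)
Your proposal is correct and is essentially the paper's argument: both compare the shifted form $\|S(\cdot+1/2n)\,\cdot\|^2$ to the unshifted one and then balance the resulting cross term between $Q(v)$ and $\|v\|^2$ at scale $n^{-1/2}$. The only cosmetic difference is that you expand via the exact angle-addition identity and apply Cauchy--Schwarz plus $2ab\le a^2+b^2$, whereas the paper uses the Lipschitz bound $\|S(\cdot+1/2n)v\|\le\|S(\cdot)v\|+O(n^{-1}\|v\|)$ followed by the weighted inequality $|a+b|^2\le(1+\ep)|a|^2+(1+\ep^{-1})|b|^2$ with $\ep=n^{-1/2}$ — the same balancing in a different guise.
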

\begin{proof} Since $dS/d\xi$ is bounded,
\[\|S(k/n+1/2n)\,v(k)\|\le\|S(k/n)\,v(k)\|+O(n\inv \|v(k)\|).\]
It follows from the arithmetic-geometric mean inequality that for any $\ep>0$
\[|a+b|^2\le (1+\ep)\,|a|^2+(1+\ep\inv)\,|b|^2.\]
We will take $\ep\to0$ as $n\to\iy$, so we obtain
\[\|S(k/n+1/2n)\,v(k)\|^2\le(1+\ep)\,\|S(k/n)\,v(k)\|^2+O(\ep^{-1}n^{-2}\|v(k)\|^2).\]
Similarly,
\[\|S(\l/n+1/2n)\,\wh v\|^2\le(1+\ep)\,\|S(\l/n)\,\wh v\|^2+
O(\ep^{-1}n^{-2}\|\wh v)\|^2)\]
\[=(1+\ep)\,\|S(\l/n)\,\wh v\|^2+O(\ep^{-1}n^{-1/2}\|v\|^2),\]
where we used $\|\wh v\|=n^{3/4}\,\|v\|$.

Thus, 
\[(n(I+UM_nU^*)\,v,\,v)\le (1+\ep)\,Q(v)+O(\ep^{-1}n\inv\|v\|^2).\]
Similarly,
\[(n(I+UM_nU^*)\,v,\,v)\ge (1+\ep)^{-1}\,Q(v)-O(\ep^{-1}n\inv\|v\|^2).\]
We set $\ep=n^{-1/2}$ and put the inequalities together to get the statement of the proposition.\end{proof}

Recall that $Q(v)=(n(I-M_n)\,v,v)$. If in the statement of the proposition we take the minimum of both sides over all $v$ with $\|v\|=1$ we deduce that 
\[n+n\la_n=(1+O(n^{-1/2}))(n-n\la_1)+O(n^{-1/2}),\]
where $\la_n$ is the bottom eigenvalue of $M_n$ and $\la_1$ the top eigenvalue. Since $\la_1=1-\m_1/n+o(1/n)$, we have $n-n\la_1=\m_1+o(1)$, and then $n+n\la_n=\m_1+o(1)$, and then $\la_n=-1+\m_1/n+o(1)$. 

Using the minimax characterization of the eigenvalues we show similarly that $\la_{n-k+1}=-1+\m_k/n+o(1)$ for each $k$.
\sp

\section{A Stochastic Argument} \label{PS5}
This section gives a bound on the largest eigenvalue of the matrix $M_n$ using a probabilistic argument.  By inspection,
$$M_n'=\frac{1}{3}I+\frac{2}{3}M_n$$ is a sub-stochastic matrix (with non-negative entries and row sums at most 1).  Take $M_n''$ as in (\ref{Peq2}), an $(n+1)\times (n+1)$ stochastic matrix corresponding to a Markov chain absorbing at 0. The first (Dirichlet) eigenvector has first entry 0 and its corresponding eigenvalue $\beta^*$ is the top eigenvalue of $M_n'$.  Thus $$\beta=\frac{\left(\beta^*-\frac{1}{3}\right)}{(2/3)}=\frac{3}{2}\beta^*-\frac{1}{2}$$
 is the top eigenvector of $M_n$.

We will work in continuous time, thus, for any transition matrix $M$,
$$M_t=\sum_{j=0}^\infty \frac{e^{-t}M^jt^j}{j!}=e^{t(M-I)}.$$

The matrix $L=I-M$, the opposite of the generator of the semigroup $\{M_t\}_{t \geq 0}$, has row sums zero, and non-positive off diagonal entries.\footnote{Since some of our readers (indeed some of our authors) may not be probabilists we insert the following note; given any matrix $L(x,y)$ with row sums zero and non-positive off diagonal entries one may construct  a continuous time Markov process $W=(W_t)_{t\geq 0}$ as follows.  Suppose $W_0=w_0$ is fixed.  The process stays at $w_0$ for an exponential time $\sigma_0$ with mean $1/|L(w_0,w_0)|$.  (Thus $P\{\sigma_0\geq t\}=e^{-t L(w_0,w_0)}$.)  Then, choose $w_1\neq w_0$ with probability 
$|L(w_0,w_1)|/L(w_0,w_0)$.  Stay at $w_1$ for an exponential time $\sigma_1$ (with mean $1/L(w_1,w_1)$).  Continue, choosing from $L(w_1,.)/L(w_1,w_1)$.}  If $\mathbf{v}$ is a right eigenvector of $M$ with eigenvalue $\beta$, then $\mathbf{v}$ is an eigenvector of $L$ with eigenvalue $1-\beta$.  A lower bound for the non trivial eigenvalues of $L$ gives an upper bound for the eigenvalues of $M$.  Throughout, we specialize to $L=I-M_n$, let $\lambda^*$ be the lowest non-zero eigenvalue of $L$, and $\beta$ the highest eigenvector of $M_n$.

Standard theory for absorbing Markov chains with all non-absorbing states connected shows that if $\tau$ is the time to first absorption, for any non absorbing state $\xi$, as $t$ tends to infinity,   $$\lim_{t\rightarrow \infty}\frac{-\log P_\xi (\tau>t)}{t}=\lambda^*.$$

Thus an upper bound on $\beta$ will follow from an upper bound on $P_\xi(\tau>t)$.  Here is an outline of the proof.  Begin by coupling the absorbing chain of interest with a simple random walk on $\mathcal{C}_n=\mathbb{Z}/(n\mathbb{Z})$.  For a fixed $b$, let $\tau_b$ be the first time that the simple random walk travels $\pm b$ from its start.  We derive the bound  $P_{\xi}(\tau>\tau_b)\leq G_b$, where $G_b<1$ is a particular constant described below.  Define a sequence of stopping times $\tau_b^i$ as follows.  $\tau_b^1=\tau_b$, $\tau_b^2$  is the first time following $\tau_b^1$ that the walk travels $\pm b$, similarly define $\tau_b^m$.  By the strong law of large numbers, $\tau^m_b/m\rightarrow \mu_b=E(\tau_b^1)$ almost surely.  Thus $$P\{\tau>m\mu_b\}\cong P\{\tau>\tau_b^m\}.$$
Using the Markov property, $P\{\tau>\tau_b^m\}\leq G^m_b.$  This implies there are positive $c_1$, $c_2$ with 
$$P\{\tau>c_1m\mu_b\}\leq G^m_b+e^{-c_2m}.$$
In our problem, classical random walk estimates show $\mu_b\sim b^2$.  We show, for $b=\sqrt{n}$, $G_{\sqrt{n}}$ is bounded away from one.  Thus $$P\{\tau>c_1m\mu_b\}\leq 2\max (G^m_b,e^{-c_2m})$$ and $$\frac{\log{P\{\tau>c_1m \mu_b\}}}{m\mu_b}\leq \frac{c'}{\mu_b}=\frac{c''}{n}$$ for some $c',c''<0$.  Backtracking gives the claimed bound in Theorem \ref{PT3}.

The argument is fairly robust\textemdash it works for a variety of diagonal entries.  At the end of the proof, some additions are suggested which should give the right constant multiplying $\frac{1}{n}$.  

We begin by constructing two processes.  For as long as possible, general absorption rates will be used.  Let $X=(x_t)_{t\geq 0}$ be the standard continuous time random walk on $\mathbb{Z}$ with jump rates $1$ between neighbors.  Take $x_0=0$.  Fix $b\in \mathbb{Z}_+$ and let $\tau_b$ be the first hitting time of $\{-b-1,b+1\}$:
\begin{equation}\label{PLeq1} \tau_b=\inf\{t\geq 0:|x_t|=b+1\}\end{equation}
Let $\{u_x\}_{x\in \mathbb{Z}} $ be killing rates, e.g.\ arbitrary non-negative real numbers.  Add a cemetery state $\infty$ to $\mathbb{Z}$.  
An absorbed process $\overline{x}=(\overline{x}_t)_{t\geq 0}$, behaving as $x$ until it is absorbed at $\infty$ with the rates $\{u_x\}_{x\in \mathbb{Z}}$ can be constructed as follows:  Let $\mathcal{E}$ be an independent exponential random variable with mean 1.  
Define an absorption time $\overline{\tau}\in [0,\infty]$ by $$\overline{\tau}=\inf_{t\geq 0}\left\{\int_0^t \mu_{x_s}ds\geq \mathcal{E}\right\}.$$
As soon as $\{u_x\}_{x\in\mathbb{Z}}$ does not vanish identically, $\overline{\tau}$ is characterized by \begin{equation}\label{PLeq2}\int_0^{\overline{\tau}} u_{x_s}ds=\mathcal{E}. \end{equation}
More simply, $\overline{x}_t=\begin{cases}x_t &\text{ if }t<\overline{\tau}\\\infty &\text{ otherwise}\end{cases}$ for $0\leq t<\infty$.

The two processes are defined on the same probability space as are $\overline{\tau}$ and $\tau_b$.  The first goal is to estimate the chance that $\overline{\tau}>\tau_b$ in terms of the given rates.  Our bounds are crude but suffice for Theorem \ref{PT3}.

\begin{proposition}\label{PLp1}  With notation as above, for any $b\geq1$,
\begin{equation}\label{PLeq3} P\{\overline{\tau}>\tau_b\}\leq \left(\frac{1}{1+(b+1)^2 v_0/2} \prod_{k=1}^b\frac{1}{1+(b+1)(b+1-k)v_k}\right)^{\frac{1}{b+1}}
\end{equation}
with $v_k=\min(u_{-k},u_k)$.
\end{proposition}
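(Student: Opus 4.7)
My plan is to exploit a Feynman--Kac representation of the survival probability and analyse the resulting second-order difference equation. Since $\mathcal{E}$ is exponential with mean one and independent of the walk $x$, conditioning on the trajectory up to $\tau_b$ yields
\[
P(\overline{\tau} > \tau_b) = E_0\!\left[\exp\!\left(-\int_0^{\tau_b} u_{x_s}\,ds\right)\right].
\]
Because $v_{|k|} = \min(u_k, u_{-k}) \leq u_k$ pointwise, this is bounded above by $h(0)$, where $h(x) := E_x[\exp(-\int_0^{\tau_b} v_{|x_s|}\,ds)]$. As the killing rate $v_{|\cdot|}$ and the walk are both symmetric under $y \mapsto -y$, the function $h$ is even. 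A first-step analysis on the jump/holding decomposition of the continuous-time walk then shows that $h$ satisfies
\[
(2 + v_{|x|})\,h(x) = h(x-1) + h(x+1), \qquad -b \leq x \leq b,
\]
with boundary values $h(\pm(b+1)) = 1$.

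The key step is to examine the first differences $g(k) := h(k) - h(k-1)$ for $1 \leq k \leq b+1$. Evenness combined with the equation at $x=0$ forces $g(1) = v_0\,h(0)/2$, while the interior equation rearranges to $g(k+1) - g(k) = v_k\,h(k)$. Since $h \geq 0$ and $v_k \geq 0$, the $g(k)$ are nonnegative and nondecreasing, so $h$ itself is nondecreasing on $\{0,1,\ldots,b+1\}$. Telescoping $g(1)+\cdots+g(b+1) = 1 - h(0)$ and collecting coefficients then yields the identity
\[
1 = h(0)\bigl[1 + (b+1)v_0/2\bigr] + \sum_{j=1}^{b} (b+1-j)\,v_j\,h(j).
\]
Replacing each $h(j)$ on the right by the smaller $h(0)$ produces the linear bound
\[
h(0) \leq \frac{1}{1 + (b+1)v_0/2 + \sum_{j=1}^{b} (b+1-j)\,v_j}.
\]

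To recover the product form in \eqref{PLeq3}, I would apply the arithmetic--geometric mean inequality to the $b+1$ numbers $1 + (b+1)^2 v_0/2$ and $1 + (b+1)(b+1-j)v_j$ for $j=1,\ldots,b$: their arithmetic mean equals exactly the denominator above, so taking reciprocals and extracting a $(b+1)$-st root gives the claimed inequality. The main obstacle I anticipate is establishing the monotonicity $h(0) \leq h(1) \leq \cdots \leq h(b+1)$; once the sign control on the differences $g(k)$ is in hand, the replacement $h(j) \geq h(0)$ and the concluding AM--GM step are routine.
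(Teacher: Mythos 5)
Your proof is correct, and it takes a genuinely different route from the paper's. The paper keeps the probabilistic representation $P\{\overline{\tau}>\tau_b\}=E[\exp(-\sum_{0\le y\le b}v_yL_y(\tau_b))]$ in terms of the local times of the reflected walk $|x_t|$, applies H\"older's inequality with $b+1$ factors, and then invokes the exact fact (Ray--Knight/Kent) that $L_y(\tau_b)$ is exponential with mean $b+1-y$ (mean $(b+1)/2$ at $y=0$) to evaluate each Laplace transform; the product of $(b+1)$-st roots is exactly the right-hand side of \eqref{PLeq3}. You instead solve the discrete Feynman--Kac boundary value problem $(2+v_{|x|})h(x)=h(x-1)+h(x+1)$, $h(\pm(b+1))=1$, directly: the sign and monotonicity of the increments $g(k)$ give the exact identity $1=h(0)\bigl[1+(b+1)v_0/2\bigr]+\sum_{j=1}^{b}(b+1-j)v_jh(j)$, and replacing $h(j)$ by $h(0)$ yields
$$h(0)\le\Bigl(1+(b+1)v_0/2+\sum_{j=1}^b(b+1-j)v_j\Bigr)^{-1},$$
which is the reciprocal of the \emph{arithmetic} mean of the factors $1+(b+1)^2v_0/2$ and $1+(b+1)(b+1-k)v_k$; since AM--GM bounds that arithmetic mean below by the geometric mean, your bound is actually sharper than \eqref{PLeq3} and implies it. What your approach buys is elementarity and strength: you avoid both H\"older and the distributional facts about $L_y(\tau_b)$, and you obtain a strictly better (harmonic-type) estimate. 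What the paper's approach buys is access to the joint law of the local times, which the authors remark they hope to exploit for sharp constants; also note that the later steps of Section \ref{PS5} use the specific product form $G_b$ and its monotonicity/rearrangement properties \eqref{PLeq6}--\eqref{PLeq8}, so if one substituted your bound one would want to check (easily) that the analogous monotonicity holds for the arithmetic-mean expression.
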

Note that the bound is achievable; if all $v_y=0$ then both sides equal 1.
\begin{proof}For any $k\in \mathbb{Z}$, $v_k\leq u_k$.  Thus if $\tau$ is the stopping time defined in (\ref{PLeq2}) with $u_k$ replaced by $v_k$, $\tau\geq \overline{\tau}$.  Therefore it is sufficient to bound $P\{\tau>\tau_b\}$ from above.  Now, everything is symmetric about zero.  Consider the process $Y=(Y_t)_{t\geq0}=(|x_t|)_{t\geq0}.$  This is Markov with jump rates:
$$J(y,y')=\begin{cases}2 &\text{ if } y=0,y'=1\\
1 &\text{ if } y=\mathbb{Z}_+\text{ and } |y'-y|=1\\
0&\text{ otherwise}.
\end{cases} $$
Clearly $\tau_b=\inf_{t\geq0}\{Y_t=b+1\}$.  Define the family of local times associated to $Y$:
$$L_y(t)=\int_0^t\delta_y(Y_s)ds \text{ for } y\in \mathbb{Z}_+,\, t\geq0,$$ where $\delta_y$ is the indicator function of $y$.
For any $t\geq 0$,
$$\int_0^t v_{x_s ds}=\sum_{0\leq y\leq b}v_yL_y(t).$$
This gives $$\{\tau>\tau_b\}=\{\sum_{0\leq y\leq b}v_yL_y(\tau_b)<\mathcal{E}\}.$$
Taking expectations of both sides with respect to $\mathcal{E}$
\begin{align}\label{PLeq4}P\{\tau>\tau_b\}&=E\left\{\operatorname{exp}\left(-\sum_{0\leq y\leq b}v_yL_y(\tau_b)\right)\right\}\\&\leq \prod_{y=1}^bE\left\{\operatorname{exp}\left(-(b+1)v_yL_y(\tau_b)\right)\right\}^{\frac{1}{b+1}}.\end{align}

The last bound follows from H{\"o}lder's inequality (with $b+1$ functions).

It is well known (see \cite{Kent} or Claim 2.4 of \cite{PS} for the discrete time version) that for any $y$, $1\leq y\leq b$, $L_y(\tau_b)$ is distributed as an exponential random variable with mean $(b+1-y)$ and $L_0(\tau_b)$ is exponential with mean $\frac{b+1}{2}$.  (The process leaves zero twice as fast as it leaves other points.)  Thus, for $1\leq y\leq b$, $$E\{\operatorname{exp}(-(b+1)v_yL_y(\tau_b)\}=\frac{(b+1-y)^{-1}}{(b+1-y)^{-1}+(b+1)v_y}.$$
$$E\{\operatorname{exp}(-(b+1)v_0L_0(\tau_b)\}=\frac{1}{1+(b+1)^2v_0/2}.$$ 

This completes the proof of Proposition \ref{PLp1}.

\end{proof}

The bound of Proposition \ref{PLp1} suggests introducing functions $F_b,G_b$ on $\mathbb{R}^{b+1}_+$.  Given by
\begin{align}\label{PLeq5}F_b(\mathbf{v})&=\left\{\frac{1}{1+(b+1)^2v_0/2}\prod_{l=1}^b\frac{1}{1+(b+1)(b+1-i)v_l}\right\}^{\frac{1}{l+1}}\\
G_b(\mathbf{v})&=\left\{\prod_{k=0}^{b}\frac{1}{1+(b+1)(b+1-k)v_k/2}\right\}^{\frac{1}{(b+1)}}.
\end{align}
They have the following crucial monotonicity properties: say that $\mathbf{v},\mathbf{v}'\in \mathbb{R}^{b+1}$ satisfies $\mathbf{v}\leq\mathbf{v}'$ if this is true coordinate-wise.   For $\mathbf{v}\in \mathbb{R}^{b+1}$, let $\overline{\mathbf{v}}$ be the non-decreasing rearrangement of $v$.  Then 
\begin{align}
\label{PLeq6} F_b(\mathbf{v})&\leq G_b(\mathbf{v})\\
\label{PLeq7}\mathbf{v}\leq\mathbf{v}'\,\Rightarrow G_b(\mathbf{v})&\geq G_b(\mathbf{v}')\\
\label{PLeq8}G_b(\mathbf{v})&\leq G_b(\overline{\mathbf{v}})
\end{align}

Return now to the process underlying Theorem \ref{PT3} (still keeping the extinction rates general.)  Let $z=(z_t)_{t\geq0}$ be defined on $\mathbb{Z}/n\mathbb{Z}$; it jumps to nearest neighbors at rate l and is killed with rates $u=(u_\xi)_{\xi\in \mathbb{Z}/n\mathbb{Z}}$.  Suppose $z_o=\xi$.  Let $v_p$, $0\leq p\leq n-1$ denote the non-decreasing rearrangement of $\mathbf{u}$.  Let $\tau$ be the absorption time of $z$.  Fix $b$, $0\leq b\leq n/2-1$ and let 
$$\tau_b=\inf_{t>0}\{z_\tau\in \{\xi-b-1,\xi+b+1\}\}.$$
Proposition \ref{PLp1} in conjunction with properties (\ref{PLeq6}), (\ref{PLeq7}), and (\ref{PLeq8}), imply that for any $\mathbf{u}$, with $G_b(\mathbf{u})$ depending only on the first $b$ coordinates of $\mathbf{u}$,
\begin{equation}\label{PLeq9} P_\xi[\tau>\tau_b]\leq G_b(\mathbf{u}).
\end{equation}
Note that the upper bound is independent of $\xi$.

Introduce a sequence $\xi_i$ of further stopping times: $\xi_1=\xi_b$, and if $\xi_m$ has been constructed,
\begin{equation}\label{PLeq10}\xi_{m+1}=\inf\{t>\xi_m: z_t\in \{z_{\xi_m}-b-1,z_{\xi_m}+b+1\}\}\end{equation}
Informally speaking these stopping times end up being good.  Because they cannot be larger than $\tau$, as in the previous treatment of a random walk on $\mathbb{Z}/n\mathbb{Z}$ coinciding with $z_t$ up to the absorption time, then $\xi_m$ are (almost surely) finite for all $m$ and the strong law of large numbers gives:
$$\lim_{m\rightarrow \alpha}\frac{\xi_m}{m}\rightarrow\mu_b=(b+1)^2$$
where $\mu_b=E\{\xi_1\}=(b+1)^2$ from the Classical Gambler's Ruin (see Chapter 14 of \cite{Feller}).

This suggests that, for $m$ large, the quantities $$P_\xi[\tau>m\mu_b]\text{ and }P_{\xi}[\tau>\xi_m]$$
should behave similarly.  Of course, care must be taken because $\tau$ and $\xi_m$ are not independent.  To proceed, we use a large deviations bound for $\xi_m$.
\begin{proposition}For $\xi_m$ defined in (\ref{PLeq10}), there are positive constants, $c_1$, $c_2$, independent of $b$ and $n$ such that for all $m\geq 1$,
$$P[\xi_m>c_1m\mu_b]\leq e^{-c_2m}.$$
\end{proposition}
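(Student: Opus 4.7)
The plan is to apply the Cram\'er--Chernoff large deviation method to the i.i.d.\ sum $\xi_m = \sum_{i=1}^{m} T_i$, where $T_i := \xi_i - \xi_{i-1}$. The only nontrivial input is a moment generating function bound for $T_1/\mu_b$ that is uniform in $b$; once that is in hand, the Chernoff step is immediate.

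First I would record that the $T_m$ are i.i.d.\ copies of $\tau_b$ from \eqref{PLeq1}. Because $b+1 \le n/2$, an excursion of radius $b+1$ never wraps around $\mathbb{Z}/n\mathbb{Z}$, so by translation invariance and the strong Markov property applied at $\xi_{m-1}$, each increment is distributed as the exit time of $X$ from $\{-b-1,\,b+1\}$ starting at $0$, independent of the past; in particular $E[T_m] = \mu_b$.

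Next I would produce a $b$-uniform exponential tail for $T_1/\mu_b$ by iterating Markov's inequality. The function $u(x) := E_x[\tau_b]$ solves a discrete Poisson equation with Dirichlet data at $\pm(b+1)$; its explicit quadratic solution gives $\max_{|x|\le b} E_x[\tau_b] = \mu_b$ (attained at the center). Markov's inequality therefore yields $P_x[\tau_b > 2\mu_b] \le 1/2$ uniformly in $x$ and $b$, and iterating via the strong Markov property of $X$ at the deterministic times $2k\mu_b$ gives
\[
P_0[\tau_b > 2k\mu_b] \le 2^{-k} \qquad (k \ge 1),
\]
i.e.\ $P[T_1 > s\mu_b] \le 2\cdot 2^{-s/2}$ for every $s \ge 0$. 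It follows that for any $\theta \in \bigl(0,(\log 2)/2\bigr)$, the quantity $M(\theta) := \sup_b E\bigl[\exp(\theta\, T_1/\mu_b)\bigr]$ is finite, with a bound that is independent of $b$ and $n$.

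The Chernoff step is then routine: by independence of the $T_i$,
\[
P\bigl[\xi_m > c_1 m \mu_b\bigr] \le e^{-\theta c_1 m}\, E\!\left[e^{\theta \xi_m/\mu_b}\right] \le \exp\bigl(-m(\theta c_1 - \log M(\theta))\bigr),
\]
so fixing such a $\theta$ and any $c_1 > \theta^{-1}\log M(\theta)$ yields the claim with $c_2 := \theta c_1 - \log M(\theta) > 0$. The main obstacle is ensuring uniformity in $b$ throughout; this is controlled entirely by the scaling identity $\max_x E_x[\tau_b] = \mu_b$ from the classical gambler's ruin, which the iterated-Markov step propagates to a $b$-independent exponential tail and hence to a $b$-independent MGF and large deviation rate.
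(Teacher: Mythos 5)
Your proof is correct, and it reaches the same final Chernoff step as the paper, but the key input --- a $b$-uniform exponential moment for $\xi_1/\mu_b$ --- is obtained by a genuinely different route. The paper uses the exact spectral representation of the exit time as a sum of $b+1$ independent exponential variables with means $a_k^{-1}=2\left(1-\cos\left(\frac{\pi(2k-1)}{2(b+1)}\right)\right)$ (citing Keilson, Diaconis--Miclo, and Fill), writes the moment generating function as the product $\prod_k a_k/(a_k-\theta)$, and bounds it at $\theta=a_1/2$, using that $a_1\mu_b$ is bounded below uniformly in $b$. You instead derive a $b$-uniform geometric tail $P_0[\tau_b>2k\mu_b]\le 2^{-k}$ from the elementary gambler's-ruin identity $\max_x E_x[\tau_b]=\mu_b$, Markov's inequality, and the Markov property applied at the deterministic times $2k\mu_b$, and from this deduce $\sup_b E[\exp(\theta\,\tau_b/\mu_b)]<\infty$ for $\theta<(\log 2)/2$. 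Your version is more elementary and more robust: it needs nothing beyond comparability of the worst-case and typical mean exit times, so it transfers to essentially any exit problem, whereas the paper's argument is tied to the birth-and-death structure but in principle tracks the true tail decay rate $1/a_1$ and hence better constants. Both proofs share the reduction to i.i.d.\ increments (legitimate since $b+1\le n/2$ rules out wrap-around on $\mathbb{Z}/n\mathbb{Z}$, a point you correctly flag) and the routine Cram\'er--Chernoff conclusion.
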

\begin{proof}Observe first that this is simply a large deviations bound for the first hitting time of the simple random walk (\ref{PLeq1}) so that $n$ does not enter.  The law of $\xi_1$ is well known (see \cite{Keilson}, \cite{DM}, and \cite{Fill}).  It can be represented as a sum of $b+1$ independent exponential variables with means $a_1,a_2,\dots,a_{b+1}$  given by:
$$a_k^{-1}=2\left(1-\cos\left(\frac{\pi(2k-1)}{2(b+1)}\right)\right).$$
Thus for $\theta \in (0,a_1)$
$$E[e^{\theta\xi_1}]=\prod_{k=1}^{b+1}\frac{a_k}{a_k-\theta}.$$
By simple calculus, there is $c>0$ such that for all $a\in (0,\frac{1}{2}],\, -\log(1-a)\leq c a$.  Thus, for $\theta \in (0,a_1/2]$,
$$E(e^{\theta\xi_1})\leq e^{c\theta \sum_{n=1}^{b+1}\frac{1}{a_n}}.$$
Taking $\theta=\frac{a_1}{2}$,
$$E[e^{a_1 \xi_1/2}]\leq e^{c\sum_{n=0}^{b+1}\frac{a_1}{2a_n}}.$$
Note that $a_n$ is of order $(n/b)^2$ so the right side of the last inequality is bounded uniformly in $b$, say by $k>1$.  Now, $\xi_n$ is a sum of $m$ i.i.d.\ random variables so for any $c_1>0$
$$P[\xi_m>c_1m\mu_b]\leq e^{\frac{-c_1a_1m \mu_b}{2}}E[e^{a_1\xi_1/2}]^m\leq e^{-m(c_1a_1\mu_b)/2-\log k}.$$
Since $\mu_b=(b+1)^2$, $a_1\mu_b$ can be bounded below by a constant $\epsilon>0$, uniformly in $b\in \mathbb{N}$.  Thus if $c_1=4\log k/\epsilon$, the claimed bound holds with $c_2=\log(k)$.
\end{proof}
We can now set up a bound for the top eigenvalue.  Working on $\mathbb{Z}/n\mathbb{Z}$ but still with general absorption rates:
\begin{align}
P_\xi[\tau>c_1m\mu_b]&=P_\xi[\tau>c_1m\mu_b;\xi_m\leq c_1m \mu_b]+P_\xi[\tau>c_1m\mu_d,\xi_m>c_1m\mu_b]\\
&\leq P_\xi[\tau>\xi_m]+P_\xi[\xi_m>c_1m\mu_b]\\
&\leq G_b^m(\mathbf{v})+e^{-c_2m}.
\end{align}
It follows that
$$\lambda^*=-\lim_{m\rightarrow\infty}\frac{1}{c_1m\tau_b}\log(P_\xi[\tau>c_1m\tau_b])\geq \frac{1}{c_1\tau_b}\min \{c_2,-\log(G_b(x))\}.$$
Since $\mu_b=(b+1)^2$, proving that with $b$ of order $\sqrt{n}$, $-\log(G_b(x))$ is bounded below by a positive constant, uniformly in $n$, will complete the proof.

Up to now, the kill rates $\mathbf{u}$ have been general.  Specialize now to the rates for the matrix $M''$ with any scrambling of its diagonal.  The vector $\overline{\mathbf{v}}$ is given by the $b+1$ entries of:
\begin{align*}0,\frac{1}{3}\left(1-\cos\left(\frac{2\pi}{n}\right)\right)&,\frac{1}{3}\left(1-\cos\left(\frac{2\pi}{n}\right)\right),\frac{1}{3}\left(1-\cos\left(\frac{4\pi}{n}\right)\right),\\&\frac{1}{3}\left(1-\cos\left(\frac{4\pi}{n}\right)\right),\dots, \frac{1}{3}\left(1-\cos\left(\frac{2\pi\left\lfloor\frac{n}{2}\right\rfloor}{n}\right)\right)\end{align*}

From the definition of $G_b$ at (\ref{PLeq5}) with $b=\lfloor\sqrt{n}\rfloor$, a Riemann sum approximation gives
$$\lim_{n\rightarrow \infty}G_b(\mathbf{v})=e^{\frac{-\pi^2}{24}}.$$
Indeed, 
\begin{align*}
-\log(G_b(\mathbf{v}))=&\frac{1}{b+1}\sum_{y=0}^{b-1} \log\left(1+(b+1)(b+1-y)\left(1-\cos\left(\frac{2\pi}{n}\left\lfloor\frac{y+1}{2}\right\rfloor\right)\right)\right)\\
&\sim\frac{2\pi^2}{n^2}\frac{1}{b}\sum_{y=0}^{b-1}(b+1)(b+1-y)\left\lfloor \frac{y+1}{2}\right\rfloor^2\\
&\sim \frac{\pi^2b^4}{2n^2}\frac{1}{b}\sum_{y=0}^{b-1}\left(1-\frac{y}{b}\right)\left(\frac{y}{b}\right)^2\\
& \sim\frac{\pi^2b^4}{2n^2}\int_0^1(1-y)y^2 dy\\
&\sim \frac{\pi^2}{24}.
\end{align*}
Combining the pieces, we use $\beta$ for the highest eigenvalue of $M_n$ and thus $\beta=\frac{3}{2}\beta^*-\frac{1}{2}$. Using this notation, we have shown that $\frac{c}{n}\leq \lambda^*=1-\left(\frac{1}{3}+\frac{2}{3}\beta\right).$   Thus $\beta\leq 1-\frac{3c}{2n}$.  This completes the argument and ends the proof of Theorem \ref{PT3}.
\begin{remarks}The above argument can be modified to handle quite general diagonal elements (in particular $\cos\left(\frac{2\pi a j}{n}\right), 0\leq j\leq N-1$, needed for the application to the Heisenberg random walk).  Indeed, for $a=o(n)$, the argument goes through with no essential change with $b=\sqrt{\frac{n}{a}}$ to show that with diagonal entries $\cos\left(\frac{2\pi a j}{n}\right)$, $0\leq j\leq n-1$, the eigenvalue bound $1-\frac{ca}{n}$ holds (with $c>0$ independent of $n$ and $a$).

The use of H\"older's inequality in (\ref{PLeq4}) is crude.  The joint distribution of the local times of birth and death processes is accessible (see \cite{Kent}).  We hope this can be used to give sharp results for the constant.  Finally we note that the approach to bound $\beta$ via an associated absorbing Markov chain was used in \cite{us}.  There, a geometric path argument was used to complete the analysis.  This gave cruder bounds ($\beta\leq 1- \frac{c}{n^\frac{4}{3}}$) but the argument worked for diagonal entries $\cos\left(\frac{2\pi a \xi}{n}\right)$ for any $1\leq a \leq \frac{n}{2}$ as well as negative eigenvalues.
\end{remarks}
\section{A random walk on the affine group (mod $p$)}\label{PS6}
  Let $\mathcal{A}_p$ be the affine group (mod $p$).  Here, $p$ is prime and elements of $A_p$ can be represented as pairs $(a,b), 1\leq a\leq p-1,\,\,0\leq b\leq p-1$ 
  $$(a_1,b_1)(a_2,b_2)=(a_1a_2,a_1b_2+b_1).$$
 All entries are taken mod $p$.  Fix a generator $g$ of the multiplicative group.  Let $$S=\{(1,0),(1,1),(1,-1),(g,0),(g^{-1},0)\}.$$
 Set  \begin{equation}\label{Peq4.1}Q(h)=\begin{cases}\frac{1}{5} &\text{ if }h\in S\\
 0 &\text{ else}.\end{cases}\end{equation} 
 Convolution powers of $Q$ converge to the uniform distribution $U(h)=\frac{1}{p(p-1)}.$  We use the representation theory of $A_p$ and the analytic results of previous sections to show that order $p^2$ steps are necessary and sufficient for convergence.
 \begin{theorem}  With definitions above, there are positive universal constants $c_1$,$c_2$, and $c_3$ such that for all primes $p$ and $k\geq 1$
 $$c_1e^{-c_2\frac{k}{p^2}}\leq \|Q^{*^k}-U\|_{TV}\leq c_3 e^{-\frac{c_2k}{p^2}}.$$
 \end{theorem}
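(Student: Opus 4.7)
The plan is to apply the non-abelian Fourier upper bound lemma together with the eigenvalue estimates developed in Sections~\ref{PS3} and~\ref{PS5}. The irreducible representations of $A_p$ are: (i) the $p-1$ one-dimensional characters $\rho_{\chi_j}(a,b)=\chi_j(a)$ indexed by characters $\chi_j$ of $\mathbb{F}_p^*$, and (ii) a single $(p-1)$-dimensional representation $\rho$ on $L^2(\mathbb{F}_p^*)$ acting by $(\rho(a,b)f)(x)=\psi(bx)\,f(ax)$ for a fixed nontrivial additive character $\psi$ of $\mathbb{F}_p$. The upper bound lemma then yields
\[
4\,\|Q^{*k}-U\|_{TV}^{\,2}\ \le\ \sum_{j=1}^{p-2}\bigl|\hat Q(\rho_{\chi_j})\bigr|^{2k}\ +\ (p-1)\,\bigl\|\hat Q(\rho)^k\bigr\|_{HS}^{\,2}.
\]

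Since three of the five generators have $a=1$, a direct computation gives $\hat Q(\rho_{\chi_j})=\tfrac15\bigl(3+2\cos(2\pi j/(p-1))\bigr)$, so the one-dimensional sum is exactly the chi-squared distance from uniform for a $3/5$-lazy, $\pm g$ nearest-neighbor walk on $\mathbb{F}_p^*\cong\mathbb{Z}/(p-1)\mathbb{Z}$, which standard Fourier estimates bound by $C\,e^{-c_2 k/p^2}$. For the large representation, fix a generator $g$ and use the basis $\{e_{g^i}\}_{i=0}^{p-2}$ of $L^2(\mathbb{F}_p^*)$; multiplication by $g^{\pm1}$ becomes a cyclic shift $P^{\pm1}$ and the additive characters become diagonal, yielding
\[
\hat Q(\rho)\ =\ \tfrac15 I \ +\ \tfrac45\bigl(C+D'\bigr),
\]
where $C=(P+P^*)/4$ is the Hermitian circulant of Section~\ref{PS3} on $p-1$ nodes and $D'$ is a real diagonal matrix whose entries are a permutation of $\{\tfrac12\cos(2\pi m/p)\}_{m=1}^{p-1}$. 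Since Theorem~\ref{PT1} depends only on the sorted eigenvalues of $C$ and sorted diagonal entries of $D$, the scrambling of $D'$ is harmless; taking $k=k'=\lfloor c_0\sqrt{p}\rfloor$ with $0<c_0<1$, one checks that $\lambda_1(C)-\lambda_{k'+1}(C)$ and $\lambda_1(D')-\lambda_{k+1}(D')$ are both $\Theta(1/p)$, so Theorem~\ref{PT1} together with its smallest-eigenvalue analogue in the remarks following it gives $\lambda_i(C+D')\in[-1+c/p,\,1-c/p]$ for a universal $c>0$. Hence $\|\hat Q(\rho)\|_{op}\le 1-4c/(5p)$, and the large-representation contribution is at most $(p-1)^2\bigl(1-4c/(5p)\bigr)^{2k}\le p^2 e^{-8ck/(5p)}$, which is $o(e^{-c_2 k/p^2})$ once $k\gtrsim p\log p$ and trivially absorbed into $c_3$ for smaller $k$. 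Combining the two contributions gives the upper bound.

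For the matching lower bound, the projection $\pi\colon A_p\to \mathbb{F}_p^*$, $(a,b)\mapsto a$, is a group homomorphism, so $\pi_* Q^{*k}=(\pi_* Q)^{*k}$ is the $k$-fold convolution of a $3/5$-lazy nearest-neighbor walk on $\mathbb{F}_p^*\cong\mathbb{Z}/(p-1)\mathbb{Z}$. Classical Fourier lower bounds for this walk, obtained by testing against the character $\chi_1$, give $\|\pi_* Q^{*k}-\pi_* U\|_{TV}\ge c_1\,e^{-c_2 k/(p-1)^2}$, which transfers to $Q^{*k}$ because total variation is contractive under projection.

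The main obstacle is the dimensional factor $d_\rho^{\,2}=(p-1)^2$ coming from the large representation: Weyl's inequality alone gives only a spectral gap of order $1/p^2$ for $\hat Q(\rho)$ (since $\lambda_1(D')\le \tfrac12-\pi^2/p^2$), and $p^2(1-c/p^2)^{2k}$ is not $\le c_3 e^{-c_2 k/p^2}$ for any universal $c_3$. Theorem~\ref{PT1} is exactly what closes this gap --- its uncertainty-principle improvement upgrades the spectral gap of $\hat Q(\rho)$ to $\Theta(1/p)$, so the big representation contributes only a super-polynomially small error and the convergence rate is governed cleanly by the one-dimensional characters, whose $\Theta(1/p^2)$ gaps also produce the matching lower bound via projection.
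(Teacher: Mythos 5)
Your proposal is correct and follows essentially the same route as the paper: the non-abelian upper bound lemma, the explicit list of irreducible representations of $\mathcal{A}_p$, the identification of $\hat Q(\rho)$ in the basis $\{\delta_{g^i}\}$ as (a lazy multiple of) a Hermitian circulant plus a diagonal matrix with scrambled entries $\tfrac12\cos(2\pi m/p)$, an application of Theorem \ref{PT1} and its smallest-eigenvalue remark to get the $1-c/p$ spectral bound that tames the $(p-1)^2$ dimensional factor, and a standard character-based lower bound. Your explicit observations that the scrambling of the diagonal is harmless because Theorem \ref{PT1} depends only on sorted entries, and that the lower bound transfers through the contractive projection $(a,b)\mapsto a$, are slightly more careful versions of steps the paper leaves implicit (it simply cites ``any of the three techniques'' and ``the usual second moment method'').
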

 \begin{proof}By the usual Upper Bound Lemma (see \cite{Diaconis}, Chapter 3):
 $$4\|Q^{*^k}-U\|_{TV}\leq \sum_{\rho\neq 1}d_\rho\|\hat{Q}(\rho)^k\|^2.$$
 Here, the sum is over nontrivial irreducible representations $\rho$ of $A_p$, $d_\rho$  is the dimension of $\rho$, $\hat{Q}(\rho)=\sum_h Q(h)\rho(h)$ and the norm on the right is the trace norm.  There are $p-1$ one dimensional irreducible representations indexed by $\alpha\in \{1,2,\dots,p-1\}$.
 \begin{equation}\label{Peq4.2}\rho_\alpha(a,b)=e^{2\pi i \alpha \sigma(a)/(p-1)}.
 \end{equation}
 where $\sigma:\mathbf{Z}_p^*\rightarrow\mathbf{Z}_{p-1}$ is the group morphism such that $\sigma(g)=1$.
 Then $$\hat{Q}(\rho_\alpha)=\frac{3}{5}+\frac{2}{5}\cos\left(\frac{2 \pi\alpha}{p-1}\right).$$
 There is one $(p-1)$ dimensional representation $\rho$.  This may be realized on $$V=\{f:\{1,2,\dots,p-1\}\rightarrow \mathbb{C}\}$$
 with $$\rho(a,b)f(j)=e^{\frac{2\pi i jb}{p}}f(aj),\,\,\,1\leq j\leq p-1.$$
 It is easy to check directly that $\rho$ is a representation with character
 $$\chi(a,b)=\begin{cases}
 0& a\neq 1\\
 -1 & a=1,\,b\neq 0\\
 p-1&a=1,\,b=0
 \end{cases}.$$
 A further simple check shows that $\langle \chi|\ch	\rangle=\frac{1}{p(p-1)}\sum_{a,b}|\chi(a,b)|^2=1$ and that $\chi$ is orthogonal to the characters $\rho_\alpha$ in (\ref{Peq4.2}).  It follows that $\{\rho_{\alpha}\}_{\alpha=1}^{p-1},\rho$ is a full set of irreducible representations.  Choose a basis $\delta_{g^a}(\cdot)$ for $V$, $0\leq a\leq p-2$.  Then, for ${\mathcal{Q}}$ in (\ref{Peq4.1}),
 $$\hat{\mathcal{Q}}(\rho)=\frac{1}{5}\begin{tikzpicture}[baseline=(current bounding box.center)]
\matrix (m) [matrix of math nodes,nodes in empty cells,right delimiter={)},left delimiter={(} ]{
  & 1 &  & &&   & 1  \\
 1& & & & &&   \\
  & & & & &&   \\
  & & 1&\phantom{1} &1& &    \\
   & & & & &&   \\
  & & & & &&1  \\
1 & & && &  1&\phantom{1} \\
} ;
\draw[loosely dotted,thick] (m-1-1)-- (m-4-4);
\draw[loosely dotted,thick] (m-2-1)-- (m-4-3);
\draw[loosely dotted,thick] (m-1-2)-- (m-4-5);
\draw[loosely dotted,thick] (m-4-4)-- (m-7-7.center);
\draw[loosely dotted,thick] (m-4-3)-- (m-7-6);
\draw[loosely dotted,thick] (m-4-5)-- (m-6-7);
\node[align=center] (cosgen) at (4.8cm,1cm) [below=3mm]{$1+2\cos\left(\frac{2\pi j}{p}\right)$,\,\, \tiny{$1\leq j \leq p-1$}};
\path[thick, bend left=45, <-] 
 (m-4-4.center) edge (cosgen.west);
\end{tikzpicture}.$$

Using any of the three techniques above, there is a constant $c>0$ such that the largest and smallest eigenvalues of $\hat{\mathcal{Q}}(\rho)$ (in absolute value) are bounded above by $1-\frac{c}{p}.$  Combining bounds
$$4\|Q^{*^k}-U\|^2_{TV}\leq\sum_{j=1}^{p-1} 
\left(\frac{3}{5}+\frac{2}{5}\cos\left(\frac{2\pi j}{p}\right)\right)^{2k}+(p-1)^2\left(1-\frac{c}{p}\right)^{2k}.$$
Using $\cos(x)=1+\frac{x^2}{2}+O(x^4)$, the sum is at most $c_1'e^{-c_2'\frac{k}{p^2}}$ for universal $c_1'$, $c_2'$.    The final term is exponentially smaller proving the upper bound.  The lower bound follows from the usual second moment method. (See \cite{Diaconis} Chapter 3 Theorem 2 for details.)  Further details are omitted.
 \end{proof}
 
 \begin{remark}
 In this example, the matrix $\hat{\mathcal{Q}}(\rho)$ is again the sum of a circulant and a diagonal matrix.  Here, the circulant has eigenvalues $\frac{2}{5}\cos\left(\frac{2\pi j}{p-1}\right)$, $0\leq j\leq p-2$ and the diagonal matrix has entries $\frac{3}{5}+\frac{2}{5}\cos\left(\frac{2\pi j}{p}\right)$, $1\leq j\leq p-1$.  The Weyl bounds show that the largest and smallest eigenvalues are bounded in absolute value by $1-\frac{\theta}{p^2}$ for some fixed $\theta>0$.  Using this to bound the final term in the upper bound gives $(p-1)^2\left(1-\frac{c}{p^2}\right)^{2k}.$  This shows that the walk is close to random after order $p^2\log (p)$ steps.  In the Heisenberg examples the Weyl bounds give a bound of 1 which is useless.
\end{remark}
The methods above can be applied to other walks on other groups.  While we won't carry out the details here, we briefly describe two further examples and point to our companion paper \cite{BDHMW3} for more.
\begin{example}[Borel Subgroup of $SL_2(\mathbb{F}_p)$]  Let $G$ be the $2\times 2$ matrices of the form:
$$\begin{bmatrix}a&b\\0&a^{-1}\end{bmatrix}\,\,\, a\in \mathbb{F}_p^*, b\in \mathbb{F}_p\leftrightarrow (a,b).$$
A minimal generating set (with the identity) is
$$S=\{\text{id}, (g,0), (g^{-1},0),(1,1),(1,-1)\},\,\, g \text{ a generator of }\mathbb{F}_p^*.$$
The group has order $p(p-1)$ with $p-1$ 1-dimensional representations and 4 representations of dimension $(p-1)/2$.  They are explicitly described in \cite{CR} p. 67.   The Fourier analysis of the measure $Q$ supported on $S$ is almost the same as the analysis for the affine group.  The results are that order $p^2$ steps are necessary and sufficient for convergence to the uniform distribution.
\end{example}
\begin{example}[$M(p^3)$]  There are two nonabelian groups of order $p^3$: the Heisenberg group discussed above and $M(p^3)$. See \cite{Suzuki} Chapter 4 Section 4.  One description of the latter is:
$$M(p^3)=\{(a,b):a\in \mathbb{Z}_p, b\in \mathbb{Z}_p^2\}, \,\, (a,b)(a',b')=(a+a',a*b'+b)$$
with $a*b=(1+ap)b \,\,(\operatorname{mod} p^2).$  This group has the same character table as $H_1(p)$.  It thus has $p^2$ 1-dimensional representations and $p-1$ representations of dimension $p$.  A minimal generating set (for odd $p$ the identity is not needed to take care of parity problems) is $$S=\{(1,0)(-1,0)(0,1),(0,-1)\}.$$
The Fourier transforms of the associated $Q$ at the $p$ dimensional representations have the same form as the matrices in (\ref{Peq1.1}) with diagonal elements $$2\cos\left(\frac{2\pi c}{p^2}(1+jp)\right), \,\,0\leq j\leq p-1$$
where $1\leq c\leq p-1$ is fixed (for the $c$th representation).  We have not carried out the details, but, as shown in \cite{DiaconisA}, it is known that order $p^2$ steps are necessary and sufficient for convergence.
\end{example}
\section{Eigenvalues in the Bulk}
Consider the matrix $M_n(a)$ as in (\ref{Peq1.1}) with $$\cos\left(\frac{2\pi j a}{m}\right),\,\,0\leq j\leq n-1$$ as the diagonal elements.  The sections above give bounds on the largest and smallest eigenvalues.  It is natural to give bounds for the empirical measure of all the eigenvalues.  This is straightforward, using a theorem of Kac-Murdock-Szeg{\"o} from \cite{KMS}.  We use the elegant form of Trotter \cite{Trotter}.  If $\lambda_1\geq\lambda_2\geq\dots\geq \lambda_n$ are the eigenvalues of $M_n(a)$, let $$\Lambda_n=\frac{1}{n}\sum_{i=1}^n\delta_{\lambda_i }$$ be the associated empirical measure.  To describe the limit let
\begin{equation}\label{Peqstar}f_2(x)=\begin{cases}\frac{2}{\pi(1 + |x|)} F_{2, 1} \left( \frac{1}{2}, \frac{1}{2} ; 1 ;
  \left( \frac{1 - |x|}{1 + |x|} \right)^2\right)&-1\leq x\leq 1\\0& \text{ else }\end{cases}\end{equation}
where $F_{2,1}$ is the hypergeometric function.
%
Let $\mu_2$ be the associated measure.  Distance between $\Lambda_n$ and $\mu_2$ is measured in the $d_2$ Wasserstein distance:
$$d_2^2(\Lambda_n,\mu_2)=\sup E|W-Z|^2 \text{ with } W\sim \Lambda_n, \, Z\sim\mu_2.$$
\begin{theorem}  Let $\Lambda_n$ be the empirical measure of the matrix $M_n(a)$ with $1\leq a\leq n-1$.  Let $\mu_2$ be defined by (\ref{Peqstar}).   Then, with $a$ fixed, as $n\rightarrow \infty$,
$$d_2(\Lambda_n,\mu_2)\rightarrow 0.$$
\end{theorem}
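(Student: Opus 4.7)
The plan is to apply the Kac-Murdock-Szeg\"o theorem in the form given by Trotter \cite{Trotter}, which describes the limiting empirical spectral distribution of matrices of the form ``circulant $+$ diagonal'' via the joint distribution of the two symbols. I would begin by writing $M_n(a)=C_n+D_n(a)$, where $C_n$ is the circulant with eigenvalues $\tfrac12\cos(2\pi k/n)$ for $0\le k\le n-1$ (diagonal in the Fourier basis with symbol $c(t)=\tfrac12\cos(2\pi t)$) and $D_n(a)$ is diagonal with entries $\tfrac12\cos(2\pi ja/n)$ for $0\le j\le n-1$ (diagonal in the standard basis with symbol $d(t)=\tfrac12\cos(2\pi at)$). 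Trotter's theorem will then yield that $\Lambda_n$ converges weakly to the law of
\[ Z \;=\; c(U)+d(V), \]
where $U,V$ are independent uniform random variables on $[0,1]$. Since $a\in\mathbb{Z}$ is fixed, $d(V)$ has the same distribution as $c(V)$, so $Z$ is the sum of two i.i.d.\ copies of $\tfrac12\cos(2\pi U)$.

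The next step is to identify the density of $Z$ with $f_2$. Each summand has the arcsine density $\tfrac{2}{\pi\sqrt{1-4y^2}}$ on $[-\tfrac12,\tfrac12]$, so by symmetry I may assume $x\ge 0$ and write
\[ f_Z(x)=\frac{4}{\pi^2}\int_{x-1/2}^{1/2}\frac{dy}{\sqrt{(1-4y^2)\bigl(1-4(x-y)^2\bigr)}}. \]
The substitution $u=2y-x$ factors the radicand as $\bigl((1-x)^2-u^2\bigr)\bigl((1+x)^2-u^2\bigr)$; then setting $u=(1-x)\sin\theta$ converts the integral to the complete elliptic integral $K(k)/(1+x)$ with modulus $k=(1-x)/(1+x)$. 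Using the classical identity $K(k)=\tfrac{\pi}{2}{}_2F_1\bigl(\tfrac12,\tfrac12;1;k^2\bigr)$ yields exactly the expression for $f_2(x)$ in \eqref{Peqstar}.

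To upgrade weak convergence to convergence in the Wasserstein distance $d_2$, I would note that by the Weyl bound every eigenvalue of $M_n(a)$ lies in $[-1,1]$ (since $\lambda_1(C)+\lambda_1(D)=1$), and that $\mu_2$ is likewise supported on $[-1,1]$. For probability measures with uniformly bounded support, weak convergence implies convergence of all moments, and hence convergence in every $d_p$ for $p<\infty$; this gives $d_2(\Lambda_n,\mu_2)\to 0$.

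The main obstacle is twofold. First, one must verify that Trotter's hypotheses are met here: although the theorem is standard, the careful statement requires that the symbols be Riemann integrable and that the off-diagonal mixing between the circulant and diagonal structures vanish in an appropriate operator-theoretic sense as $n\to\infty$. Second, the density computation, while classical, requires the specific factorization of the radicand followed by the trigonometric substitution to recover the elliptic integral; matching the resulting expression to $f_2$ depends on recognizing the hypergeometric representation of $K$. Both are routine in spirit but warrant care; no new ideas are needed beyond these.
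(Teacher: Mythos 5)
Your proposal is correct and follows essentially the same route as the paper: Trotter's form of the Kac--Murdock--Szeg\"o theorem applied to the circulant-plus-diagonal decomposition, the observation that $\cos(2\pi a U)$ has the arcsine law for any integer $a$, and the reduction of the self-convolution to a complete elliptic integral $K$ with modulus $(1-|x|)/(1+|x|)$, identified with the hypergeometric expression for $f_2$. Your added remark that weak convergence of uniformly compactly supported measures upgrades to $d_2$-convergence is a reasonable way to finish; the paper relies on Trotter's statement already being in a Wasserstein-type metric.
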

See Figure \ref{AngFig1} for an example.

\begin{figure}\includegraphics[width=4in]{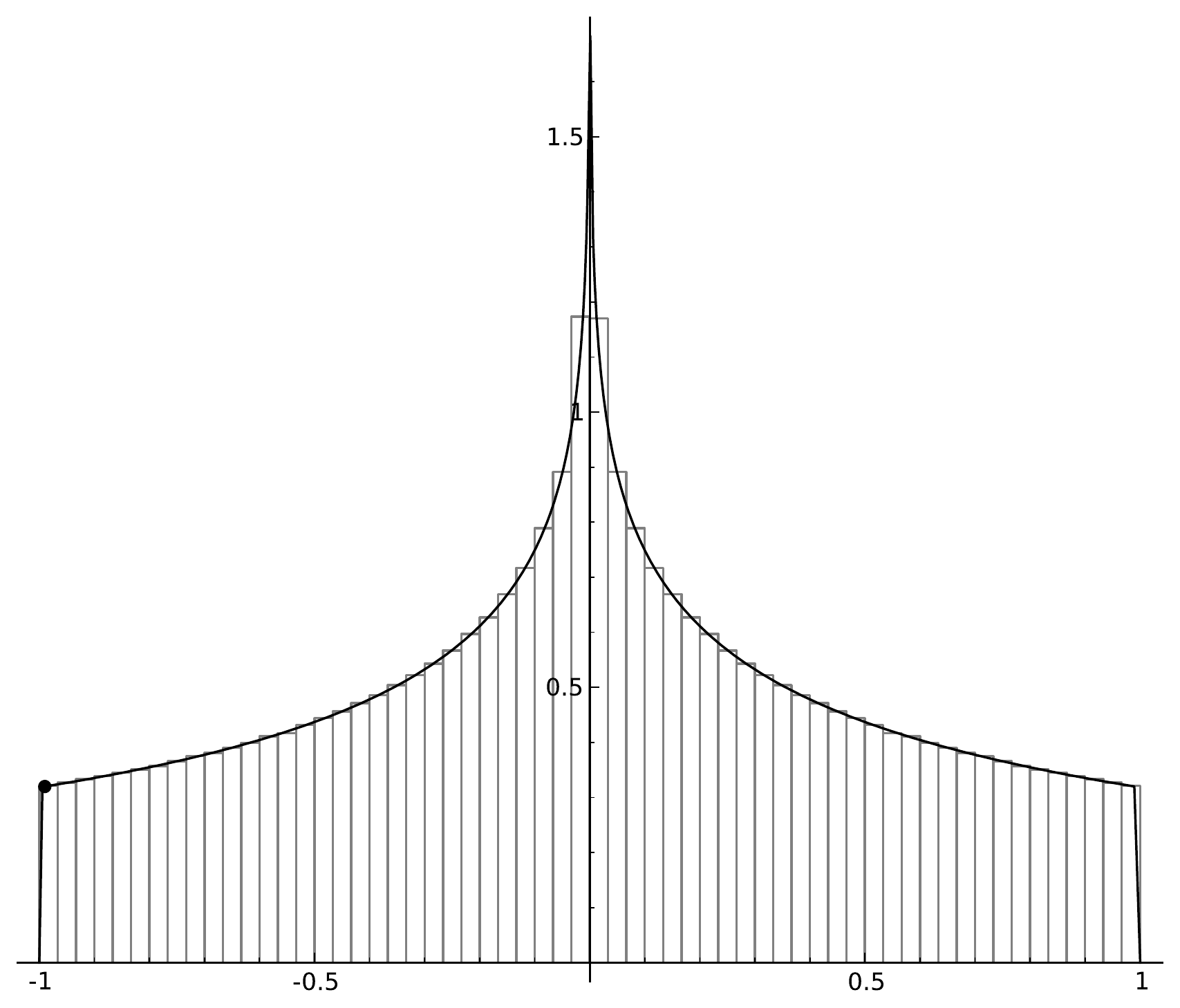}
\caption{The curve shows the eigenvalues predicted by $f_2$, while the histogram gives the distribution of the actual eigenvalues of $M_{10,000}(=M_{10,000}(1))$.  Note that the curve has a very extreme, but finite slope around $-1$ and $1$.  For example, although it is clear that $f_2(-1)=0$, the small point on the left of the picture corresponds to $(-.99,f_2(-.99))\approx .32$}\label{AngFig1}
\end{figure}
\begin{remark}  We have not seen a way to use this kind of asymptotics to bound the rate of convergence of a random walk.  Indeed our limit theorem shows that the distribution of the bulk does not depend on $a$ while previous results show the extreme eigenvalues crucially depend on $a$.
\end{remark}
\begin{proof}Trotter's version of the Kac-Murdock-Szeg{\"o} theorem applies to $M_n$.  If $$\sigma(x,y)=\cos(2\pi a x)+\cos(2\pi y)\,\, 0\leq x,y\leq 1,$$ consider $\sigma$ as a random variable on $[0,1]^2$, endowed with the Lebesgue measure.  This has distribution $\cos(2\pi a U_1)+\cos(2\pi U_2)$ where $U_1$ and $U_2$ are independent uniform on $[0,1]$.  An elementary calculation shows that $\cos(2\pi a U)$ has an arc-sine density $f(x)$ no matter what the integer $a$ is.  
\begin{equation}\label{my34}f(x)=\begin{cases}\frac{1}{\pi\sqrt{1-x^2}}&-1\leq x\leq 1\\0& \text{ else }\end{cases}.\end{equation}Trotter shows that the empirical measure is close to $\mu_2$, the distribution of $\sigma$.
It follows that the empirical measure of the eigenvalues has limiting distribution the law of $(X+Y)/2$ where $X$ and $Y$ are independent with density $f(x)$.  This convolution has density \begin{equation}f_2(x)=\begin{cases}\frac{2 }{\pi^{2}}\, \int_{\max\{-1,2x-1\}}^{\min\{1,2x + 1\}} \frac{1}{\sqrt{{\left({\left(2x -y\right)}^{2} - 1\right)} {\left(y^{2} - 1\right)}}}\,{d y}&-1\leq x\leq 1\\0& \text{ else }\end{cases}.
\end{equation}  The arguement below shows that this integral is in fact \begin{equation}f_2(x)=\begin{cases}\frac{2}{\pi(1 + |x|)} F_{2, 1} \left( \frac{1}{2}, \frac{1}{2} ; 1 ;
  \left( \frac{1 - |x|}{1 + |x|} \right)^2\right)&-1\leq x\leq 1\\0& \text{ else }\end{cases}\end{equation}

The integral in (\ref{my34}) is in fact a well known integral in a different guise.  Let $0 \leqslant k \leqslant 1$. Define
\begin{equation}
  \label{kkdef} K (k) = \int_0^1 \frac{d t}{\sqrt{(1 - t^2) (1 - k^2 t^2)}} .
\end{equation}
This is a complete elliptic integral and equals
\[ \frac{\pi}{2} F_{2, 1} \left( \frac{1}{2}, \frac{1}{2} ; 1, k^2 \right) .
\] (See Section 22.301 of \cite{WandW}.)
For ease of notation, we will evaluate 
\[ f_3(x) = \int_{\max (- 1, x - 1)}^{\min (1, x + 1)} \frac{d t}{\sqrt{(1 - t^2) (1 - (x - t)^2})}  \] for $|x| \leqslant 2$
 Making the variable change $t
\rightarrow t + \frac{x}{2}$ it becomes
\[ \int_{\max (- \frac{x}{2} - 1, \frac{x}{2} - 1)}^{\min (- \frac{x}{2} + 1,
   \frac{x}{2} + 1)} \frac{d t}{\sqrt{h (t)}} \]
where
\[ h (t) = (a^2 - t^2) (b^2 - t^2), \hspace{2em} a = 1 + \frac{x}{2},
   \hspace{1em} b = 1 - \frac{x}{2} . \]
This is an even function of $x$ so it is enough to consider when $x \geqslant 0$. Then we need to evaluate
\[ \int_{- b}^b \frac{d t}{\sqrt{h (t)}} . \]
Make the variable change $t \rightarrow b t$ and the integral becomes
\[ \int_{- 1}^1 \frac{d t}{\sqrt{(1 - t^2) (a^2 - b^2 t^2)}} = \frac{2}{a} K
   (k), \hspace{2em} k = \frac{b}{a} . \]
The factor of $2$ comes from the fact that we are integrating an even function
from $- 1$ to $1$, whereas in (\ref{kkdef}) the integral is from $0$ to $1$.
Thus

\[ f_3(x) = \frac{2 \pi}{2 + |x|} F_{2, 1} \left( \frac{1}{2}, \frac{1}{2} ; 1 ;
  \left( \frac{2 - |x|}{2 + |x|} \right)^2\right) . \] Sending $x\rightarrow 2x$ and multiplying by the appropriate constant, we have that the integral in (\ref{my34}) is in fact
  $$\frac{2}{\pi(1 + |x|)} F_{2, 1} \left( \frac{1}{2}, \frac{1}{2} ; 1 ;
  \left( \frac{1 - |x|}{1 + |x|} \right)^2\right)$$\end{proof}
\begin{remark}
\cite{PT} gives a similar expresson for the sum of two general beta variables.
\end{remark}
\begin{acknowledgment}  As this work progressed, we received useful comments from Florin Boca, Ben Bond, Bob Guralnick, Susan Holmes, Marty Isaacs, Evita Nestoridi, Jim Pitman, Laurent Saloff-Coste, and Thomas Strohmer.  We offer thanks for the remarks of our very helpful reviewer.
\end{acknowledgment}
\bibliographystyle{plain}
\bibliography{CpD}

\end{document}